\documentclass[12pt, reqno]{amsart}
\usepackage{amsmath}
\usepackage{amssymb,amscd}

\addtolength{\textwidth}{2.4cm} \addtolength{\hoffset}{-1.2cm}

\numberwithin{equation}{section}

\newtheorem{theorem}{Theorem}[section]
\newtheorem{corollary}[theorem]{Corollary}
\newtheorem{lemma}[theorem]{Lemma}
\newtheorem{prop}[theorem]{Proposition}

\theoremstyle{definition}

\newtheorem{remark}[theorem]{Remark}
\newtheorem{example}[theorem]{Example}

\begin{document}

\title[  Non-K\"ahler complex manifolds and SKT  structures ]{Group actions, non-K\"ahler complex manifolds and SKT  structures }

\author[M. Poddar]{Mainak Poddar}

\address{Mathematics Group, Middle East Technical University, Northern Cyprus Campus, 99738 Kalkanli, Guzelyurt,  Mersin 10, Turkey}

\email{mainakp@gmail.com}

\author[A. S. Thakur]{Ajay Singh Thakur}

\address{Department of Mathematics and Statistics, Indian Institute of Technology Kanpur, Kanpur 208016, India}

\email{asthakur@iitk.ac.in}

\subjclass[2010]{Primary: 32L05, 32M05, 32Q55, 53C10, 57R22; Secondary: 32V05, 57R18, 57R30.}

\keywords{Non-K\"ahler, complex structure, CR-structure, SKT, CYT,  principal bundles, orbifolds.}

\begin{abstract}  We give a construction of integrable complex structures on the total space of a smooth principal bundle over a complex manifold, with an even dimensional compact Lie group as structure group, under certain conditions. This generalizes the constructions of complex structure on compact Lie groups by Samelson and Wang, and on principal torus bundles by Calabi-Eckmann and others. It also yields  large classes of new examples of non-K\"ahler compact complex manifolds. Moreover, under suitable restrictions on the base manifold, the structure group, and  characteristic classes, the total space of the principal bundle admits SKT metrics. This generalizes recent results of Grantcharov et al. We study the Picard group and the algebraic dimension of the total space in some cases. We also use a slightly generalized version of the construction to obtain (non-K\"ahler) complex structures on tangential frame bundles of complex orbifolds.
\end{abstract}
  
\maketitle

\section{Introduction}\label{intro}
Let $K$ be a compact  Lie group. Let $K_{\mathbb{C}} $ be the complexification of $K$; to wit, $K$ may be regarded as a  Lie subgroup of  $K_{\mathbb{C}} $, and the Lie algebra of   $K_{\mathbb{C}} $ is the complexification of the Lie algebra of $K$.
Then $K_{\mathbb{C}} $ is a reductive complex Lie group and $K$ is a maximal compact subgroup of  $K_{\mathbb{C}} $ (cf. Section 5.3, \cite{akhiezer}). In fact, $K_{\mathbb{C}} $ can be regarded as a reductive complex linear algebraic group. Note that, for a compact Lie group, the notions of complexification and universal complexification coincide. 

We say that a smooth principal $K$-bundle $E_K \rightarrow M$ over a complex manifold $M$  admits a {\it complexification} if the associated principal $K_{\mathbb{C}}$-bundle $E_K \times_K  K_{\mathbb{C}} \rightarrow M$ admits a  holomorphic principal $K_{\mathbb{C}}$-bundle structure. For instance, every smooth flat principal $K$-bundle over a complex manifold admits a complexification. Also, if $M$ is a Stein manifold, then every smooth principal $K$-bundle over $M$ admits a complexification by Grauert's Oka-principle. 
 Note that if $E_K \rightarrow M$ admits a complexification,  then the  transversely holomorphic foliation  induced by the $K$-action on  $E_K$ admits a complexification in the sense of Haefliger-Sundararaman (cf. Remark 4.3, \cite{haefliger}).
 
More generally, let $G$ be a complex linear algebraic group and $K$ be a maximal compact subgroup of $G$.  Then $K$ is a smooth deformation retract of $G$ (cf. Theorem 3.4, Chapter 4, \cite{VGO}). Therefore, any holomorphic principal $G$-bundle $E_G \rightarrow M$ over a complex manifold $M$, when considered as a smooth bundle, admits a smooth reduction of the structure group from $G$ to $K$.  The smooth principal $K$-bundle $E_K \rightarrow M$, corresponding to this reduction, admits a complexification (see Proposition \ref{linear} below). 
 
 Let  $E_K \rightarrow M$  be a smooth principal $K$-bundle  which admits a complexification.  
Suppose $K$ is even dimensional. Then we show that  $E_K \rightarrow M$  has a holomorphic fiber bundle structure (see Theorem 
\ref{main}).  
In addition, if $K$ is a compact torus then  the bundle $E_K \rightarrow M$ has a holomorphic principal $K$-bundle structure. Similarly, if $K$ is odd dimensional,  the bundle $E_K \times S^1 \rightarrow M$ admits a holomorphic fiber bundle structure
with fiber $K \times S^1$.
In particular, the space  $E_K$ admits a normal almost contact structure (nacs).
 Moreover, in most cases, the  space $E_K$ or $E_K \times S^1$  is a non-K\"ahler complex manifold. 
 In this way we obtain new examples of non-K\"ahler complex manifolds; see Corollary \ref{unitary}, Examples \ref{CY}, \ref{Stein}, \ref{toric}, 
 and Remark \ref{oddcase} below.

The above construction of non-K\"ahler complex manifolds generalizes some well-known constructions in the literature.
 For example,  Hopf \cite{hopf} and Calabi-Eckmann 
 \cite{ce} constructed compact non-K\"ahler  complex  manifolds by  obtaining  complex analytic structures on the product of two odd dimensional spheres, $S^{2m-1}\times S^{2n-1}$. Such a product can be viewed as the total space of a smooth principal $S^1 \times S^1$-bundle which admits a complexification,
\begin{equation}\label{tautological}
 (\mathbb C^m \setminus \{0\}) \times (\mathbb C^n \setminus \{0\}) \rightarrow \mathbb C \mathbb P^{m-1} \times \mathbb C \mathbb P^{n-1}.
 \end{equation} Here, the projection map is the component-wise Hopf map. 
The complex structures thus obtained on $S^{2m-1} \times S^{2n-1}$ are those of Hopf and Calabi-Eckmann manifolds. (Moreover, the generalized Calabi-Eckman manifolds considered in \cite{BMT} may be viewed as a special case of our construction.) On the other hand, 
 Samelson \cite{Sam}  and Wang \cite{wang} constructed complex analytic structures on even dimensional compact Lie groups. Our construction  unifies the constructions of Samelson and Wang, and  Calabi-Eckmann, by being a common generalization to both.

 Recently, there have been several other interesting generalizations of Hopf and Calabi-Eckmann manifolds, giving new classes of  non-K\"ahler compact complex manifolds.  Among these are the LVM manifolds (cf. \cite{lv},  \cite{meersseman}), the more general LVMB manifolds \cite{b,BM}
  and the
(closely related) moment angle manifolds  \cite{PU, Ish}. In \cite{thakur}, inspired by Loeb-Nicolau's construction \cite{ln}, Sankaran and the second-named author 
obtained a family of complex structures on $S(\mathcal{L}_1) \times S(\mathcal{L}_2)$, where $S(\mathcal{L}_i) \to X_i$, $i=1,2,$ is the smooth principal $S^1$-bundle associated to a holomorphic principal $\mathbb C^*$-bundle $\mathcal{L}_i \rightarrow X_i$.  Broadly speaking, our construction works in a more general setting compared to these constructions. However,  these other constructions use larger classes of foliations and consequently obtain a larger class of complex structures.

 By the description in \cite{zaffran}, an LVMB manifold satisfying  a certain rationality condition (K)  admits a Seifert principal fibration over an orbifold toric variety. If the base of the fibration is a nonsingular toric variety then the corresponding LVMB manifolds
may be recovered as a special case of our construction.

 It is possible to look at all LVMB manifolds that satisfy condition (K) as part of a more general phenomenon. It is shown in \cite{zaffran} that any such LVMB manifold can be identified to an orbit space $X/L$,  where $X$ is a complex manifold that admits a proper holomorphic action of $(\mathbb{C}^*)^{2m}$ with finite stabilizers, and $L$ is a torsion-free closed co-compact subgroup of $(\mathbb{ C}^*)^{2m}$. In Section \ref{proper}, we  consider the more general situation of any proper holomorphic action  of a complex linear algebraic group $G$ on a complex manifold $X$. If the rank of $G$ is greater than one, then $G$ has a nontrivial closed torsion-free
complex Lie subgroup $L$ which acts freely on $X$. This yields new, possibly non-K\"ahler, complex manifolds $X/L$. In particular, given an even dimensional effective complex analytic orbifold $V^{2n}$, we can take $X$ to be the manifold of frames of the holomorphic tangent (or cotangent) bundle of $V^{2n}$, and $G$ to be $GL(2n,\mathbb{C})$. Then, there exists $L$ such that $G/L$ is diffeomorphic to  
$ U(2n)$, and $X/L$ is a non-K\"ahler complex manifold. Analogous results are obtained in the case of odd dimensional Calabi-Yau orbifolds.   

There is another approach to constructing complex structure on  the total space of a smooth principal bundle with complex fiber and base, using a connection. This is particularly useful for torus principal bundles (see \cite{gp, ggp}). We discuss this approach for principal bundles with an even dimensional compact Lie group as structure group in Section \ref{ap}.

The study of special Hermitian structures on non-K\"ahler complex manifolds, such as SKT and CYT structures, have gained in importance as they play a key role in certain types of string theory (cf. \cite{GHR, Hul, Str, bbg}). They are also closely related to generalized K\"ahler geometry 
(cf. \cite{Hit, Gua, CG, FT, cavalcanti}). It is known that every compact Lie group $K$ of even dimension admits an SKT structure (cf. \cite{MS}). 
 Moreover,  the Calabi-Eckmann manifolds $S^1 \times S^1$, $S^1 \times S^3$ and $S^3 \times S^3$ admit SKT structure. The remaining Calabi-Eckmann manifolds, $S^{2m-1} \times S^{2n-1}$, $m,n >1$, do not admit SKT structures (cf. Example 5.17 \cite{cavalcanti}).  Recently, 
 non-existence of SKT structures on $SU(5)/T^2$ has been shown in \cite{fino}. The complex manifold $SU(5)/T^2$ is the total space of a holomorpic principal $T^2$-bundle over the flag manifold $SU(5)/T^4$.
 
 However, in the case of principal torus bundles with even dimensional fibers and first Chern class of type $(1,1)$, a number of interesting constructions and results were obtained in \cite{ggp} for both SKT and CYT structures. 
 We relate our construction of complex structures on principal bundles to these in Section \ref{shs}.   
 Observe that the existence of a complexification should have a bearing on the Dolbeault grading of  the characteristic classes. 
 
Let $K$ be an even dimensional compact connected Lie group.   If $E_K \to M$ admits a complexification $E_{K_{\mathbb{C}}} \rightarrow M$, then there exits a principal torus bundle $E_K \to X$ which also admits a complexification.  Here $X=E_{K_{\mathbb{C}}}/B \cong E_K/ T$ where $T$ is a maximal torus in $K$ and $B$ is a Borel subgroup of $K_{\mathbb{C}}$ that contains $T$.  We can choose the complex structure on
 $E_K$ so that it matches the one  constructed in \cite{ggp}. Hence the results obtained there may be applied to $E_K$. In addition, assume $K$ is an unitary, a special orthogonal or a compact symplectic group.   We obtain families of  SKT structures on  $K$ parametrized by an open subset of the space of K\"ahler structures on $K_{\mathbb{C}}/B$ (cf. Lemma \ref{grc}). 
 When $M$ is a  projective manifold, then  $X$ is also projective. In this case, we obtain sufficient  conditions for existence of SKT structure on $E_K$  in terms of   the characteristic classes of $E_K \rightarrow M$ (cf. Theorem \ref{pbc}). If $K$ is the unitary group, then the condition is that the square of the first Chern class equals the second Chern class. If $K$ is the special orthogonal or the compact symplectic group, the condition is that the corresponding first Pontryagin class  is zero.

  Let $K$ be a simply connected classical Lie group, namely the compact symplectic or special unitary group,  of even dimension.
  We study the Picard group of $E_K$  in Section \ref{picard}  when the base manifold $M$ is a simply-connected projective manifold with
  trivial $h^{0,2}$. We obtain  ${\rm Pic}(E_K) \cong {\rm Pic}(M) \oplus H^1(T, \mathcal{O}_T)$. 
 Under these assumptions,  we also have
  $H^i(E_K,\mathbb{Z}) \cong  H^i(M,\mathbb{Z}) $ for $i=1,2$. Furthermore, note that the Dolbeault cohomology groups of $K$ have been computed in \cite{pittie}. These, when used with the inequality 7.5 in Appendix 2 of \cite{Hir} which is derived from the Borel spectral sequence, provide some  upper bounds for $h^{p,q}(E_K)$.
  
  If the complex structure on $T$ is generic, then $T$ admits no non-constant meromorphic functions. Hence meromorphic functions on $E_K$ are pull-backs of meromorphic functions on $X$.
On the other hand, suppose the complex structure on $E_K$ is in accordance with  Section \ref{shs}, so that $T$ has the complex structure of a product of elliptic curves.  In this case $T$ has the highest possible transcendence degree. However, using the above description of the Picard group, we find that $E_K$ and $X$ still have isomorphic fields of meromorphic functions. See Theorem \ref{adim}.  It follows that $E_K$ is not a Moishezon manifold or an abstract algebraic variety in these cases.


\section{Complex structures on an even dimensional compact Lie group}\label{cs} 


 Samelson \cite{Sam} and Wang \cite{wang} proved the existence of a family of left invariant complex analytic structures on an even dimensional compact Lie group. A classification of such structures was given in \cite{pittie}.  Recently, the topic was revisited in \cite{LMN} where  complex analytic structures that are not invariant were also constructed.
 We  briefly review the left invariant complex structures 
 following \cite{pittie} and \cite{LMN} for the ease of reference later on.

 Let $K$ be an even dimensional compact  Lie group of rank $2r$. 
 Let $\mathfrak{k}$ be the Lie algebra of $K$. 
 Let $K_{\mathbb{C}}$ be the universal complexification of $K$ and $\mathfrak{k}_{\mathbb{C}} = \mathfrak{k} \otimes \mathbb C$ be its Lie algebra. Let $J$ be an  almost complex structure on  $\mathfrak{k}$. We denote its extension to $\mathfrak{k}_{\mathbb{C}}$ by the same symbol.  Let $\mathfrak{l}$ denote the $i$ or $-i$ eigenspace of $J$ on $\mathfrak{k}_{\mathbb{C}} $. 
  Then $J$ induces a left invariant  integrable complex structure 
on $K$ if and only if $\mathfrak{l}$ is involutive with respect to the Lie bracket.    
  
First assume $K$ is connected.  Then Proposition 2.2 of \cite{pittie} states that a left invariant complex structure on $K$  
is determined by a complex Lie subalgebra $\mathfrak{l}$ of $\mathfrak{k}_{\mathbb{C}} $ such \begin{equation} \label{subalgebra}
 \mathfrak{l} \bigcap \mathfrak{k} = 0 \mbox{ and } \mathfrak{k}_{\mathbb{C}} = \mathfrak{l} \oplus \bar{\mathfrak{l}}.
 \end{equation} 
  Pittie \cite{pittie} called a complex Lie subalgebra $\mathfrak{l}$ of $\mathfrak{k}_{\mathbb{C}} $ satisfying condition  \eqref{subalgebra} a Samelson subalgebra. 
  Our main interest is in the complex Lie subgroup $L$ of $K_{\mathbb{C}}$ corresponding to a Samelson subalgebra $\mathfrak{l}$. We assume that the Lie subalgebra $\mathfrak{l}$  defines vectors of type $(0,1)$ in $T_e(K) \otimes \mathbb C = \mathfrak{k}_{\mathbb{C}} $. This is
  necessary for the conformity of the complex structures on $K$ and $K_{\mathbb{C}}/L$ in Proposition \ref{semisimple} below.

  Fix a maximal torus $T \cong (S^1)^{2r}$ of $K$ and let  $B$ be a Borel subgroup of $G$ containing $T$. Let $\mathfrak{t}$ and $\mathfrak{b}$ be the Lie algebras of $T$ and $B$ respectively. Let $\mathfrak{h} = \mathfrak{t} \otimes \mathbb C$ be the Cartan subalgebra of $\mathfrak{g}$ containing $\mathfrak{t}$. 
Let  $\mathfrak{a}$ be a subalgebra of $\mathfrak{h}$ such that $ \mathfrak{h} = \mathfrak{a} \oplus \bar{\mathfrak{a}}$. Then 
  \begin{equation}\label{nilpotent}
 \mathfrak{l} := \mathfrak{a} \oplus  [\mathfrak{b}, \mathfrak{b}]
 \end{equation}  
 is a Samelson subalgebra, and every Samelson subalgebra is of this form for appropriate choices of $\mathfrak{t}$, $\mathfrak{b}$ and $\mathfrak{a}$ (cf. Corollary 2.5.1, \cite{pittie}). 
  
  For  fixed $\mathfrak{t}$ and $\mathfrak{b}$, the possible choices of $\mathfrak{a}$ are  described below.
   Fix a real basis for $\mathfrak{t}$ and identify $\mathfrak{h}$ with $\mathbb C^{2r}$ so that $\mathfrak{t}$ is included in $\mathfrak{h}$ as $i\mathbb R^{2r} \subset \mathbb C^{2r}$. Now consider a $\mathbb C$-linear map $\Lambda: \mathbb C^r  \rightarrow \mathfrak{h}$ given by
  $$z \mapsto A\cdot z$$
  where $A= (a_i^j)$ is a $(2r\times r)$-complex matrix. We denote the image of this linear map by $\mathfrak{a}$.  Let 
  $$ A_{\Lambda} :=\begin{pmatrix}
  \mbox{Re } a_1^1 & -\mbox{Im }a_1^1 & \cdots & \mbox{Re }a_1^r & -\mbox{Im }a_1^r\\
  \vdots    &      \vdots       &        &  \vdots          &  \vdots \\
  \mbox{Re } a_{2r}^1 & -\mbox{Im }a_{2r}^1 & \cdots & \mbox{Re }a_{2r}^r & -\mbox{Im }a_{2r}^r
  \end{pmatrix}
  $$ be the $(2r \times 2r)$-real matrix where  $ \mbox{Re } a_{i}^j$ and $\mbox{Im } a_{i}^j$ denote the real and imaginary parts of 
  $a_i^j$ respectively. 
  Note that  $\mathfrak{a} \bigcap \mathfrak{t} = 0$   if and only if  $A_{\Lambda}$ is non-degenerate (cf.  Lemma  3.1 \cite{LMN} ).
 In this case $\mathfrak{h} = \mathfrak{a} \oplus \bar{\mathfrak{a}}$. Moreover,
 any such decomposition of $\mathfrak{h}$
 corresponds to  a  $\mathbb C$-linear map $\Lambda: \mathbb C^r \rightarrow \mathfrak{h}$ with $\det A_{\Lambda} \neq 0$.

 Let $H \cong (\mathbb C^*)^{2r}$ be the Cartan subgroup of $G$ associated to  $\mathfrak{h}$. Let $\exp(\mathfrak{a})$ be the connected complex Lie subgroup of $H$ with Lie algebra $\mathfrak{a}$. Then $$\dim (\exp(\mathfrak{a}) \bigcap (S^1)^{2r}) = 0$$ as $\mathfrak{a} \bigcap \mathfrak{t} = 0$. In this case, it follows by Lemma 2.9 \cite{LMN} that $\exp(\mathfrak{a})$ is isomorphic to $\mathbb C^r$. By compactness of  $(S^1)^{2r}$,   $\exp(\mathfrak{a}) \bigcap (S^1)^{2r}$ is  finite. Since  $\exp(\mathfrak{a})$ is torsion-free, $\exp(\mathfrak{a}) \bigcap (S^1)^{2r} $ is trivial. Furthermore, $\exp(\mathfrak{a})$ is a closed subgroup of $H$ (cf. Lemma 3.1, \cite{LMN}).

Note that the subalgebra $ [\mathfrak{b},\mathfrak{b}]$ is an ideal in $\mathfrak{b}$.
 The connected complex Lie subgroup $U$ of $K_{\mathbb{C}}$ corresponding to  $[\mathfrak{b},\mathfrak{b}] $  is the unipotent radical of  $B$  and we have  the semidirect product decomposition,
  $B = H \cdot U$. Hence the connected complex Lie subgroup $L$ of $K_{\mathbb{C}}$ associated to the Samelson subalgebra
 $\mathfrak{l} = \mathfrak{a} \oplus  [\mathfrak{b},\mathfrak{b}]  $ is of the form $$L= \exp(\mathfrak{a})\cdot U \,.$$ As $U$ is torsion-free and contractible (cf. Proposition 8.2.1 \cite{springer}), it follows that $L$ is a contractible, closed and 
torsion-free subgroup of $K_{\mathbb{C}}$.

Let $\widetilde{K_{\mathbb{C}}}$ be the simply-connected complex Lie group with Lie algebra $\mathfrak{k}_{\mathbb{C}}$ and let $\widetilde{L}$ be the connected closed Lie subgroup of  $\widetilde{K_{\mathbb{C}}}$  to Lie algebra $\mathfrak{l}$. Then $\widetilde{L}$ is
a covering group of $L$. However $L$ is contractible. So   $\widetilde{L}$ is isomorphic to $L$.  The proof of  Proposition \ref{semisimple} below
for  connected $K$ is then evident  from Proposition 2.3 \cite{pittie} and the discussion below it.

 \begin{prop}\label{semisimple}
 	Let $K$ be an even dimensional  compact Lie group with universal complexification $K_{\mathbb{C}}$. Let $K$ be endowed with the left invariant complex structure define by a Samelson subalgebra $\mathfrak{l}$. Let $L$ be the connected complex Lie subgroup of $K_{\mathbb{C}}$ associated to $\mathfrak{l}$. Then $L$ is a torsion-free closed subgroup of $K_{\mathbb{C}}$ such that $K \bigcap L = 1$  and $KL = K_{\mathbb{C}}$.   Moreover, the  natural  map $K \rightarrow K_{\mathbb{C}}/L$ is a biholomorphism. \qed
 \end{prop}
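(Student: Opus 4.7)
The plan is to handle the connected case first, assembling what has already been developed above, and then reduce the disconnected case to it. For connected $K$, the preceding paragraphs show that $L=\exp(\mathfrak{a})\cdot U$ is a closed, torsion-free, contractible complex subgroup of $K_{\mathbb{C}}$; what remains is to verify $K\cap L=\{1\}$, $KL=K_{\mathbb{C}}$, and the biholomorphism. The intersection is trivial because on Lie algebras $\mathfrak{k}\cap\mathfrak{l}\subset\mathfrak{l}\cap\bar{\mathfrak{l}}=0$ by \eqref{subalgebra}, so $K\cap L$ is discrete in the compact group $K$, hence finite, hence trivial since $L$ is torsion-free.

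For $KL=K_{\mathbb{C}}$, I would examine the multiplication map $\mu\colon K\times L\to K_{\mathbb{C}}$. Its differential at $(e,e)$ is addition $\mathfrak{k}\oplus\mathfrak{l}\to\mathfrak{k}_{\mathbb{C}}$, which is a real-linear isomorphism because the real dimensions agree and the image spans $\mathfrak{k}_{\mathbb{C}}$ (as $\mathfrak{l}\oplus\bar{\mathfrak{l}}=\mathfrak{k}_{\mathbb{C}}$ forces $\bar{\mathfrak{l}}\subset\mathfrak{k}+\mathfrak{l}$). By translation, $\mu$ is a local diffeomorphism everywhere, so $KL$ is open in $K_{\mathbb{C}}$; since $K$ is compact and $K_{\mathbb{C}}/L$ is connected, the image of the natural map $\phi\colon K\to K_{\mathbb{C}}/L$ is open, closed, and nonempty in $K_{\mathbb{C}}/L$, hence all of it. Consequently $\phi$ is a continuous bijection from a compact space onto a Hausdorff space, so a homeomorphism, and by the local-diffeomorphism analysis in fact a diffeomorphism. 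Left-$K$-invariance of both complex structures reduces holomorphicity to a check at $e$: writing $X=Y+\bar{Y}\in\mathfrak{k}$ with $Y\in\mathfrak{l}$, one has $JX-iX=-2iY\in\mathfrak{l}$, which shows $d\phi_e$ intertwines $J$ with the complex structure on $\mathfrak{k}_{\mathbb{C}}/\mathfrak{l}\cong T^{1,0}_{[e]}(K_{\mathbb{C}}/L)$.

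For general $K$, let $K^0$ denote the identity component. Since $L$ is connected, $L\subset K_{\mathbb{C}}^0=(K^0)_{\mathbb{C}}$, and the universal complexification induces an isomorphism of component groups $K/K^0\cong K_{\mathbb{C}}/K_{\mathbb{C}}^0$. Hence $K\cap L\subset K^0\cap L=\{1\}$ by the connected case, and with $\gamma$ running over coset representatives of $K/K^0$ one has $KL=\bigsqcup_\gamma\gamma(K^0L)=\bigsqcup_\gamma\gamma K_{\mathbb{C}}^0=K_{\mathbb{C}}$. The biholomorphism assertion reduces component-by-component to the connected case.

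The step most likely to require care is $KL=K_{\mathbb{C}}$: openness of $KL$ is routine, but one has to juxtapose compactness of $K$ with connectedness of $K_{\mathbb{C}}/L$ to upgrade openness to equality. A secondary subtlety is fixing the sign conventions so that the $(0,1)$-subspace $\mathfrak{l}$ at $e\in K$ corresponds correctly to $\mathrm{Lie}(L)$ in the quotient $K_{\mathbb{C}}/L$; the little computation above is what verifies this compatibility.
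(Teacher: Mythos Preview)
Your argument is correct, and it is more self-contained than the paper's. In the paper, the preceding paragraphs establish that $L=\exp(\mathfrak{a})\cdot U$ is closed, contractible, and torsion-free; then, for connected $K$, the paper passes to the simply-connected cover $\widetilde{K_{\mathbb{C}}}$, observes that the connected subgroup $\widetilde{L}$ with Lie algebra $\mathfrak{l}$ must be isomorphic to $L$ (since $L$ is contractible), and then defers the remaining assertions---$K\cap L=1$, $KL=K_{\mathbb{C}}$, and the biholomorphism $K\to K_{\mathbb{C}}/L$---to Proposition~2.3 of Pittie \cite{pittie} and the discussion following it. You instead prove these claims directly: the trivial intersection from $\mathfrak{k}\cap\mathfrak{l}=0$ together with torsion-freeness of $L$; the equality $KL=K_{\mathbb{C}}$ from the multiplication map being a local diffeomorphism plus an open--closed argument on the connected quotient $K_{\mathbb{C}}/L$; and holomorphicity from the explicit verification $JX-iX\in\mathfrak{l}$ at the identity, propagated by left-$K$-invariance. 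For disconnected $K$ both proofs proceed identically, via the component-group isomorphism $K/K^0\cong K_{\mathbb{C}}/(K^0)_{\mathbb{C}}$ and coset representatives. Your route avoids the universal-cover maneuver and the external citation at the cost of writing out the surjectivity and $(0,1)$-compatibility checks; the paper's route is shorter on the page but leans on Pittie for the substantive step.
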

 
 For a general compact Lie group $K$, let $K^0$ be the identity component. We identify the subgroup $L$  of $(K^0)_{\mathbb{C}}$ obtained above  with its image in $K_{\mathbb{C}}$.  Consider the commutative diagram (cf. Section 6, Chapter III of \cite{Bour}),

\begin{equation}
\begin{CD}
 1 @>>>K^0  @ > >> K @> >> K/K^0 @>>>1  \\
@V VV  @V VV @V VV @V \rm{id} VV @V VV\\
 1@>>> (K^0)_{\mathbb{C}} @ > i>>  K_{\mathbb{C}}@>>> K/K^0@ >>>1
\end{CD}
\end{equation}
  
 This shows that $K$ and $K_{\mathbb{C}}$ have the same number of connected components. For any component of $K_{\mathbb{C}}$, we take 
 an element  $g$ of the corresponding component of $K$. Then left translation by $g$ gives a diffeomorphism between $K^0$ (respectively, $(K^0)_\mathbb{C}$) and the said component of $K$ (respectively, $K_{\mathbb{C}}$). We obtain a complex structure on $K_{\mathbb{C}}$ using this  diffeomorphism. This complex structure is independent of the choice of $g$. Moreover, the left translation by $g$ commutes with the right action of $L$. This ensures the validity of Proposition \ref{semisimple} when $K$ has multiple components.  
  
 Note that  Proposition \ref{semisimple} shows that
 there is a  holomorphic principal $L$-bundle, $$L \hookrightarrow K_{\mathbb{C}} \rightarrow K.$$

 
\section{Complex structures on the total space of principal bundles}\label{mr}

Let $K$ be a compact connected Lie group of even dimension. Let $G$ be a complex Lie group that contains $K$  as a real Lie
subgroup.  Let $L$ be a
closed complex Lie subgroup of $G$ such that $K \bigcap L = 1_G$  and $KL = G$. Then the natural map $K \rightarrow G/L$ is a 
diffeomorphism and this induces a complex structure on $K$. 

In these notes, by $L$-foliation we mean a foliation with leaf $L$. Consider the 
  left $G$-invariant $L$-foliation on $G$, given by the right translation action of $L$ on $G$. 
  Note  that this foliation is transverse to $K$. 

 \begin{prop}\label{prop1} Let $K, G$ and $ L$ be as defined above. 
 Let $E_G \rightarrow M$ be a holomorphic principal $G$-bundle
 over a complex manifold $M$. Assume that $E_G \rightarrow M$ admits a smooth
 reduction of structure group to give a smooth principal $K$-bundle, $E_K \rightarrow M$. Then the total space 
  $E_K$ admits a complex structure such that $E_K \rightarrow M$ is a holomorphic 
  fiber bundle with fiber $K$. Further, if $G$ is abelian then $E_K \rightarrow M$ is a 
  holomorphic principal $K$-bundle.
 \end{prop}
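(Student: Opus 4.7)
The plan is to realize $E_K$ as the quotient $E_G/L$, where $L$ acts by right translation, and then transport the complex structure. Since $G$ acts freely on the principal bundle $E_G$ and $L \subset G$ is a closed complex Lie subgroup, the restricted right action of $L$ on the complex manifold $E_G$ is free, proper, and holomorphic. Therefore $E_G/L$ inherits a complex manifold structure, and the projection $E_G \to E_G/L$ is a holomorphic principal $L$-bundle.

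The key step is to establish a diffeomorphism $\phi : E_K \to E_G/L$, defined as the composition of the inclusion $E_K \hookrightarrow E_G$ with the quotient map. For injectivity, suppose $\phi(e_1) = \phi(e_2)$; then $e_2 = e_1 \cdot l$ for some $l \in L$, while the fact that $e_1,e_2$ lie in a common $K$-fiber of $E_K$ gives $e_2 = e_1 \cdot k$ for some $k \in K$. Freeness of the $G$-action then forces $k = l \in K \cap L = \{1_G\}$. For surjectivity, given $[e] \in E_G/L$, fix any $e_0 \in E_K$ in the same $G$-orbit and write $e = e_0 \cdot g$; using $G = KL$, decompose $g = k\cdot l$, and observe that $e_0 \cdot k \in E_K$ represents $[e]$. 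The map $\phi$ is smooth, is a bundle map over $M$, and restricts on each fiber to the prescribed diffeomorphism $K \to G/L$; a fiberwise smooth inverse then glues to a global smooth inverse using local trivializations.

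I would then transfer the complex structure on $E_G/L$ to $E_K$ via $\phi$. Local holomorphic trivializations of the principal $G$-bundle $E_G$ descend to local holomorphic trivializations of $E_G/L \to M$ with fiber $G/L$, so $E_G/L \to M$ is a holomorphic fiber bundle. Since $G/L$ is biholomorphic to $K$ (with its induced complex structure), the same holds for $E_K \to M$ with fiber $K$.

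For the final assertion, when $G$ is abelian the subgroup $L$ is automatically normal, so $G/L$ is a complex Lie group and $E_G/L \to M$ is a holomorphic principal $G/L$-bundle. The natural map $K \to G/L$, being a smooth group homomorphism that is bijective with smooth inverse, is a Lie group isomorphism, and via the complex structure transported from $G/L$ it makes $K$ a complex Lie group. Under $\phi$, the $G/L$-principal bundle structure on $E_G/L$ corresponds to a holomorphic principal $K$-bundle structure on $E_K$. The only subtle point in the whole argument is verifying that $\phi$ is a diffeomorphism at the level of total spaces rather than merely a fiberwise one; everything else is formal once the quotient $E_G/L$ is recognized as the right model for $E_K$ as a complex manifold.
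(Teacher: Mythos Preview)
Your proof is correct and follows essentially the same approach as the paper: identify $E_K$ with $E_G/L$ via the natural map induced by the inclusion $E_K \hookrightarrow E_G$, and then pull back the holomorphic fiber bundle structure of $E_G/L \to M$. The paper phrases the diffeomorphism step in terms of transversality of the embedded $E_K$ to the $L$-foliation on $E_G$ (using the associated bundle $E_K \times_K G$ as an intermediary), whereas you argue bijectivity and smooth invertibility directly, but the content is the same.
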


 \begin{proof} Since the foliation on $G$ corresponding to the right action of $L $ is left $G$-invariant,
 it induces an $L$-foliation on $E_G$.  This foliation is
 complex analytic as $E_G$ is a holomorphic bundle. Similarly, there is an induced smooth $L$-foliation 
 on   $E_K \times_K G$.

 Let $i: E_K  \hookrightarrow  E_K \times_K G  $ be the
  inclusion map defined  by $i(e) = [(e,1_G)]$. Since 
 $E_K \rightarrow M$ is obtained by a smooth
 reduction of structure group from $E_G \rightarrow M$, there exists  a smooth isomorphism of
 principal $G$-bundles, $\rho: E_K \times_K G \to E_G$. Then $\rho \circ i$ is a smooth embedding
 of $E_K$ into $E_G$.

 The image of $i$ is transversal to the $L$-foliation in $E_K \times_K G$. 
 Since the map $\rho$ is $L$-equivariant, it 
 respects the fiberwise $L$-foliations on  $E_K \times_K G$ and $E_G$. Being a (fiberwise)
 diffeomorphism, $\rho$ preserves transversality.
 Therefore, the $L$-foliation in every fiber of $E_G$
 is transversal to the image  of corresponding fiber of $E_K$  under $\rho \circ i $.
 It follows that the $L$-foliation on $E_G$ is
 transversal to the image of $E_K$ under $ \rho \circ i$.  
 Moreover, $L$ is closed and $K\bigcap L = 1_G$. Therefore, the map $E_K \rightarrow E_G/L$ induced by 
 $\rho \circ i$ is a diffeomorphism. 
 Hence, the holomorphic fiber bundle structure on $E_G/L \rightarrow M$  can be pulled back to give a 
 holomorphic fiber bundle structure on $E_K \rightarrow M$. 

Finally, if $G$ is abelian, then $G/L$ inherits the structure of a complex Lie group. This induces a complex Lie group structure
on $K$.  With respect to this structure $E_K \to M$ is a holomorphic principal $K$-bundle.
 \end{proof}

Let $N$ be a complex linear algebraic group.
 Let $K$ be a maximal compact subgroup of $N$.  As $K$ is a smooth deformation retract of $N$ (cf. Theorem 3.4, Chapter 4, \cite{VGO}), a holomorphic principal $N$-bundle, $E_N \rightarrow M$, over a complex manifold $M$ admits a smooth reduction of the structure group from $N$ to $K$.

\begin{prop}\label{linear}
	Let $N$ be a complex linear algebraic group and let $K$ be a maximal compact  subgroup of $N$. Let $E_N \rightarrow M$ be a holomorphic principal $N$-bundle. Let $E_K \rightarrow M$ be a smooth principal $K$-bundle corresponding to the smooth reduction of the structure group from $N$ to $K$. Then  $E_K \rightarrow M$ admits a complexification.	
\end{prop}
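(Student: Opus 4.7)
The plan is to identify $K_{\mathbb{C}}$ with a Levi factor of $N$ via the classical structure theory of complex linear algebraic groups, and to realize the sought-after complexification as the quotient of $E_N$ by the unipotent radical of $N$.

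First, I would invoke Mostow's theorem for complex linear algebraic groups in characteristic zero to write $N = L \ltimes R_u(N)$, where $R_u(N)$ is the unipotent radical and $L$ is a reductive closed algebraic subgroup. Since the unipotent group $R_u(N)$ contains no nontrivial compact subgroup, every maximal compact subgroup of $N$ lies in a Levi factor; in particular one may choose $L$ to contain the given $K$. Being a maximal compact subgroup of the reductive complex algebraic group $L$, the inclusion $K \hookrightarrow L$ realizes the canonical identification $K_{\mathbb{C}} \cong L$ as complex Lie groups (here one uses that for reductive complex algebraic groups, the universal complexification of a maximal compact subgroup is the group itself).

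Next, I would form the quotient $P := E_N / R_u(N)$ for the right action of $R_u(N)$ on $E_N$. Because $R_u(N)$ is a closed normal complex Lie subgroup of $N$ acting holomorphically, freely, and properly, $P \to M$ inherits a canonical structure of a holomorphic principal $N / R_u(N) = L = K_{\mathbb{C}}$-bundle over $M$. Composing the inclusion $E_K \hookrightarrow E_N$ with the projection $E_N \to P$ gives a smooth $K$-equivariant map $E_K \to P$; this map is injective since $K \cap R_u(N) = \{1\}$. I would then check that this map exhibits $E_K$ as a smooth reduction of the structure group of $P$ from $K_{\mathbb{C}}$ to $K$, which amounts to observing that the induced map $E_K/K \to P/K_{\mathbb{C}} = M$ is the identity on $M$. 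By the standard correspondence between reductions of the structure group and sections of the associated quotient bundle, this yields a smooth isomorphism of principal $K_{\mathbb{C}}$-bundles $P \cong E_K \times_K K_{\mathbb{C}}$, and transporting the holomorphic structure from $P$ across this isomorphism produces the required holomorphic principal $K_{\mathbb{C}}$-bundle structure, i.e. the complexification.

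The main technical point requiring care is the first step, namely the placement of $K$ inside a Levi factor in such a way that the inclusion realizes the universal complexification; this relies on the structure theory of real forms in complex algebraic groups rather than on any property of the bundle $E_N \to M$. Once this is arranged, the rest of the argument is a formal manipulation of principal bundles and standard facts about quotients by free, proper, holomorphic group actions.
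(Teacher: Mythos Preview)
Your proof is correct and follows essentially the same route as the paper's: both take a Levi decomposition $N = G \cdot U$ with $G \cong K_{\mathbb{C}}$ containing $K$, form the holomorphic principal $G$-bundle $E_N/U \to M$, and observe that this is smoothly isomorphic to $E_K \times_K K_{\mathbb{C}}$. You supply more detail on the last step (via the $K$-equivariant composite $E_K \hookrightarrow E_N \to E_N/U$), whereas the paper simply asserts the isomorphism.
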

\begin{proof} Let $G$ be a reductive Levi subgroup of $N$ containing $K$. Then $G$ is a maximal reductive subgroup of $N$ and
 $G \cong K_{\mathbb{C}}$. Further, the algebraic group $N$ is isomorphic to the semi-direct product $G \cdot U$, where  $U$ is the unipotent radical of $N$ (cf. Chapter 6, \cite{onishchik}). 

 Consider the right action of $U$ on $E_N$. The induced map $E_N/U \rightarrow  M$ defines a holomorphic principal $G$-bundle. Observe that the smooth principal $G$-bundles $E_K \times_K G \rightarrow M$ and $E_N/U \rightarrow M$ are smoothly  isomorphic. 
	\end{proof}
  
  We now state our main theorem.
	\begin{theorem}\label{main} 
	Let $K$ be an even dimensional compact  Lie group endowed with a left invariant complex structure.  
	 Let $E_K \rightarrow M$ be a smooth
 principal $K$-bundle over a complex manifold $M$, which admits a complexification. Then the total space 
  $E_K$ admits a complex structure such that $E_K \rightarrow M$ is a holomorphic fiber bundle with fiber $K$. Further, if $K$ is abelian then $E_K \rightarrow M$ has the structure of a holomorphic principal $K$-bundle. \end{theorem}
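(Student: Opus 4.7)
The strategy is to reduce the theorem directly to Proposition \ref{prop1}. First I would take $G := K_{\mathbb{C}}$, the universal complexification of $K$, and set $E_G := E_K \times_K K_{\mathbb{C}}$. The hypothesis that $E_K \to M$ admits a complexification says precisely that $E_G \to M$ carries the structure of a holomorphic principal $K_{\mathbb{C}}$-bundle. Moreover, $E_K$ is a smooth reduction of the structure group of $E_G$ from $K_{\mathbb{C}}$ to $K$, via the natural inclusion $e \mapsto [(e, 1_{K_{\mathbb{C}}})]$. So the only missing ingredient required to apply Proposition \ref{prop1} is a suitable closed complex Lie subgroup $L \subset K_{\mathbb{C}}$.

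Next I would produce this $L$ from the given left invariant complex structure on $K$. By the discussion in Section \ref{cs}, that complex structure corresponds to a Samelson subalgebra $\mathfrak{l} = \mathfrak{a} \oplus [\mathfrak{b},\mathfrak{b}] \subset \mathfrak{k}_{\mathbb{C}}$, and I take $L$ to be the connected complex Lie subgroup of $K_{\mathbb{C}}$ with Lie algebra $\mathfrak{l}$ (identified with its image in $K_{\mathbb{C}}$ in the disconnected case, as explained after Proposition \ref{semisimple}). Proposition \ref{semisimple}, together with its extension to non-connected $K$, guarantees that $L$ is closed and torsion-free, $K \cap L = \{1\}$, $KL = K_{\mathbb{C}}$, and the natural map $K \to K_{\mathbb{C}}/L$ is a biholomorphism with respect to the originally chosen complex structure on $K$.

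With $G = K_{\mathbb{C}}$, $E_G = E_K \times_K K_{\mathbb{C}}$, and $L$ as above, every hypothesis of Proposition \ref{prop1} is now in place. Applying it produces a complex structure on $E_K$ for which $E_K \to M$ is a holomorphic fiber bundle with fiber $K$. The fact that the typical fiber acquires precisely the prescribed left invariant complex structure (and not some other one) follows from the biholomorphism $K \cong K_{\mathbb{C}}/L$ of Proposition \ref{semisimple}, because the identification of $E_K$ with $E_G/L$ in the proof of Proposition \ref{prop1} restricts fiberwise to this biholomorphism. Finally, if $K$ is abelian then so is $K_{\mathbb{C}}$, and the last clause of Proposition \ref{prop1} upgrades the holomorphic fiber bundle to a holomorphic principal $K$-bundle.

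Since Sections \ref{cs} and the statement of Proposition \ref{prop1} already absorb all of the real work, no genuine obstacle remains. The only points worth double-checking are the invocation of the disconnected version of Proposition \ref{semisimple} and the compatibility of the resulting fiberwise complex structure with the given one on $K$; both are immediate from the constructions in Section \ref{cs}.
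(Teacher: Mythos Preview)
Your proposal is correct and follows essentially the same approach as the paper: the paper's proof simply says to apply Propositions \ref{prop1} and \ref{semisimple}, noting that if $K$ is abelian then so is $K_{\mathbb{C}}$. You have spelled out in more detail exactly how these two propositions fit together, which is fine, but there is no substantive difference.
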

  
  \begin{proof} The result  follows by applying 
  Propositions \ref{prop1} and \ref{semisimple}. Note that if $K$ is abelian, then its  complexification is also abelian.
  \end{proof}
  
  In most of the cases the complex manifold $E_K$ is non-K\"ahler. First assume that $K$ is a non-abelain compact Lie group.  Then, it is shown in Example 2.10, \cite{monica} that $K$ is not symplectic and hence non-K\"ahler with respect to any complex structure.

  \begin{theorem}\label{nonkahler}
  	Assume that $K$ is a non-abelian compact  Lie group of even dimension. Then the complex manifold $E_K$ of Theorem \ref{main} 
	is non-K\"ahler. 
  \end{theorem}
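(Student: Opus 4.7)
The plan is to argue by contradiction, exploiting the fact that the fibers of the holomorphic fiber bundle $E_K\to M$ provided by Theorem \ref{main} are compact complex submanifolds biholomorphic to $K$ equipped with the chosen left invariant complex structure.

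Suppose $E_K$ admits a Kähler metric with Kähler form $\Omega$. Pick any point $x\in M$ and let $F_x\subset E_K$ denote the fiber over $x$. By Theorem \ref{main}, $F_x$ is a compact complex submanifold of $E_K$ biholomorphic to $K$. The restriction $\omega := \Omega|_{F_x}$ is then a closed, positive $(1,1)$-form on $F_x$, hence a Kähler form on $K$. In particular, its underlying real $2$-form is a symplectic form on the smooth manifold $K$.

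The contradiction will come from showing that a non-abelian compact Lie group of even dimension $2n$ admits no symplectic form, which is exactly the content of Example 2.10 of \cite{monica}. For completeness, the underlying reason is cohomological: write $K \cong K_{ss}\cdot T^r$ up to finite cover, where $K_{ss}$ is the compact semisimple part, which is nontrivial since $K$ is non-abelian. Since $H^1(K_{ss},\mathbb{R})=H^2(K_{ss},\mathbb{R})=0$, Künneth gives $H^2(K,\mathbb{R}) \cong H^2(T^r,\mathbb{R})$, so the de Rham class $[\omega]$ pulls back from $T^r$; hence $[\omega]^n$ lies in the image of $H^{2n}(T^r,\mathbb{R})$. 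But $2n=\dim K>r=\dim T^r$ because $K_{ss}$ is nontrivial, so $H^{2n}(T^r,\mathbb{R})=0$ and $[\omega]^n=0$. On the other hand, $\omega^n$ is a volume form on the compact oriented manifold $K$, so $\int_K\omega^n>0$, a contradiction.

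The main (and only) subtlety is justifying that the restriction of $\Omega$ to the fiber really is Kähler on the fiber, which is standard: a compact complex submanifold of a Kähler manifold is Kähler with the restricted form. Once the inherited Kähler form on $K$ is in hand, the cohomological argument above (or the direct citation of Example 2.10 of \cite{monica}) closes the proof.
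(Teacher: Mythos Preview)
Your argument is correct and follows essentially the same approach as the paper: the fibers of the holomorphic fiber bundle $E_K\to M$ are closed complex submanifolds diffeomorphic to $K$, and a closed complex submanifold of a K\"ahler manifold is K\"ahler, while $K$ is not even symplectic by Example~2.10 of \cite{monica}. The additional cohomological explanation you give for the non-symplecticity of $K$ is a helpful elaboration, but otherwise the proofs coincide.
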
 

\begin{proof}
Any  fiber of the holomorphic fiber bundle $E_K \rightarrow M$ is a complex submanifold of the total space $E_K$. Each fiber is diffeomorphic to  $K$, and hence non-K\"ahler. The theorem  follows since a closed complex submanifold of a  K\" ahler manifold is K\" ahler. 
 \end{proof}
 
  The following corollary and example yield  new classes of  non-K\"ahler complex  manifolds.
  
   \begin{corollary}\label{unitary}
  	  	Let $E \rightarrow M$ be a holomorphic vector bundle of rank $2n$ over a complex manifold $M$.  Let $U(E) \rightarrow M$ denote the unitary frame bundle   associated to the vector bundle $E \rightarrow M $.  Then  $U(E)$ is a  non-K\"ahler complex manifold. Moreover,  $U(E) \rightarrow M$ has the structure of a holomorphic fiber bundle.
	\end{corollary}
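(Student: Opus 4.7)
The plan is to realize $U(E) \to M$ as a smooth reduction of a holomorphic principal bundle with complex algebraic structure group, and then invoke the machinery just built (Proposition \ref{linear}, Theorem \ref{main}, Theorem \ref{nonkahler}) essentially verbatim.

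First I would consider the holomorphic frame bundle $GL(E) \to M$, i.e.\ the principal $GL(2n,\mathbb{C})$-bundle of $\mathbb{C}$-linear frames of $E$. Since $E$ is holomorphic, $GL(E) \to M$ is naturally a holomorphic principal bundle with structure group the complex linear algebraic group $N = GL(2n,\mathbb{C})$. A maximal compact subgroup of $GL(2n,\mathbb{C})$ is $U(2n)$, and the associated reduction of structure group from $GL(2n,\mathbb{C})$ to $U(2n)$ (obtained, for instance, by choosing a Hermitian metric on $E$) is precisely the unitary frame bundle $U(E) \to M$. By Proposition \ref{linear}, the smooth principal $U(2n)$-bundle $U(E) \to M$ therefore admits a complexification.

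Next I observe that $U(2n)$ is an even-dimensional compact Lie group (its real dimension is $4n^2$), so we may equip it with a left invariant complex structure as in Section \ref{cs}. Theorem \ref{main} then applies directly and produces a complex structure on the total space $U(E)$ with respect to which $U(E) \to M$ is a holomorphic fiber bundle with fiber $U(2n)$. This takes care of the holomorphic fiber bundle claim.

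Finally, to see that $U(E)$ is non-K\"ahler, I would apply Theorem \ref{nonkahler}. The group $U(2n)$ is non-abelian (for $n \geq 1$), so the theorem yields that $U(E)$ admits no K\"ahler metric. There are essentially no obstacles in this argument beyond verifying that these hypotheses are met; the only mild point worth spelling out is the identification of the unitary frame bundle with the smooth reduction of structure group of $GL(E)$ from $GL(2n,\mathbb{C})$ to its maximal compact $U(2n)$, which is standard and follows from the existence of a Hermitian metric on $E$ (guaranteed by partitions of unity).
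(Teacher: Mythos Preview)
Your proof is correct and follows essentially the same route as the paper: realize $U(E)$ as the smooth $U(2n)$-reduction of the holomorphic frame bundle $GL(E)\to M$, then apply Theorem \ref{main} and Theorem \ref{nonkahler}. The only cosmetic difference is that the paper bypasses Proposition \ref{linear} by observing directly that $GL(2n,\mathbb{C})$ is the complexification of $U(2n)$, so the frame bundle itself is already the required complexification; your appeal to Proposition \ref{linear} is harmless but slightly redundant here since the unipotent radical of $GL(2n,\mathbb{C})$ is trivial.
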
	

\begin{proof}	The frame bundle associated to the vector bundle $E \rightarrow M$  is a  holomorphic principal $GL(2n, \mathbb C)$-bundle. Then
the unitary frame bundle $U(E)\rightarrow M$ is a smooth principal $U(2n)$-bundle corresponding to the smooth reduction of structure group from $GL(2n,\mathbb C)$ to  $U(2n)$. As $GL(2n,\mathbb C)$ is the complexification of $U(2n)$, 	
by Theorem \ref{main}, the bundle $U(E) \rightarrow M$ has the structure of a holomorphic fiber bundle. The total space $U(E)$ is not K\"ahler by Theorem 
\ref{nonkahler}.
 \end{proof}

 \begin{example}\label{CY} Let $M$ be a complex $n$-dimensional Calabi-Yau manifold, where $n$ is odd. Then $M$ is a K\"ahler manifold that admits a non-vanishing holomorphic $n$-form $\Omega$. This implies that the holomorphic cotangent bundle $T^{\ast (1,0)}M$ of $M$ admits a holomorphic reduction of structure group to $SL(n,\mathbb{C})$.
 Since $SU(n)$ is a maximal compact subgroup of $SL(n,\mathbb{C})$, $T^{\ast (1,0)}M$ admits a smooth reduction of structure group to $SU(n)$. Note that $SU(n)$ is even dimensional when $n$ is odd. 
Let $G= SL(n,\mathbb{C})$ and $K = SU(n)$. Let   $E_G \rightarrow M$  be the bundle of precisely those  frames of $T^{\ast(1,0)}M$ the wedge product of whose components equals $\Omega$. Then $E_G \rightarrow M$ is a holomorphic principal $G$-bundle. Let  $E_K \rightarrow M$ be the associated principal $K$-bundle corresponding to the smooth reduction of structure group from $G$ to $K$.  
 Then by Theorems \ref{main} and \ref{nonkahler}, the total space $E_K$ is a non-K\"ahler  complex  manifold.  Note that $E_K$ is smoothly isomorphic to the bundle of special unitary frames of $T^{\ast(1,0)}M$.

By duality, the form $\Omega$  induces a holomorphic trivialization of the line bundle $\wedge^n T^{(1,0)}M $.  Therefore $T^{(1,0)}M $  admits holomorphic reduction of structure group to $SL(n, \mathbb{C})$. So, by a similar argument as above, the space of special unitary frames of 
 $T^{(1,0)}M$ admits the structure of a non-K\"ahler  complex  manifold. 
 
 In a similar fashion, if $M$ is a hyperk\"ahler manifold of complex dimension $4n$, then its tangent frame bundle admits a holomorphic reduction of structure group to $Sp(4n,\mathbb{C})$ and consequently a smooth reduction to $Sp(2n)$. The total space of the  smooth principal
  $Sp(2n)$-bundle is a non-K\"ahler complex manifold.  \qed \end{example} 
 
  \begin{example}\label{Stein} Let $M$ be a Stein manifold and $K$ a compact even dimensional Lie group. Given any smooth principal $K$-bundle $E_K \to M$, the associated principal
  $K_{\mathbb C}$-bundle $E_K \times_K K_{\mathbb C} \to M$ admits a unique holomorphic structure by Grauert's Oka-principle (cf. \cite{Gr}). Therefore, 
   $E_K$ admits an integrable complex structure which is non-K\"ahler if $K$ is non-abelian. This result can be further strengthened to the equivariant situation by applying the equivariant Oka-principle in \cite{HK}. Precisely, let $\Gamma$ be a compact Lie group. Let
    $E_K \to M$ be a smooth $\Gamma$-equivariant principal $K$-bundle such that $\Gamma$ acts by holomorphic transformations on $M$.
      Then $\Gamma$ acts smoothly, and hence holomorphically  on  
     $E_K \times_K K_{\mathbb C} \to M$   (cf. p. 2 of \cite{HI}). As the actions of $\Gamma$ and $L \subset K_{\mathbb C}$ commute, 
     $E_K \to M$ admits the structure of a $\Gamma$-equivariant holomorphic fiber bundle.
   \qed \end{example}

 When $K$ is an even dimensional compact torus,  we observe that the complex manifold $E_K$ is again non-K\"ahler  under certain conditions. In this case,  $K_{\mathbb{C}} = (\mathbb C^*)^{2r}$. Then the rank $2r$ vector bundle, corresponding to any holomorphic principal $(\mathbb C^*)^{2r}$-bundle, is the direct sum of $2r$ holomorphic line bundles, 
 $\mathcal{L}_1,  \ldots, \mathcal{L}_{2r}$. The following result follows from Proposition 11.3 \cite{hofer} and Corollary 11.4 \cite{hofer}.
 
 \begin{theorem}[H\"ofer]
 	Assume that $K$ is an even dimensional, compact torus and $M$ is a simply-connected compact complex manifold. If the Chern classes 
	$c_1(\mathcal{L}_1),  \ldots ,$ $ c_1(\mathcal{L}_{2r}) \in H^2(M, \mathbb R)$ are $\mathbb R$-linearly independent then the complex manifold $E_K$  of Theorem \ref{main} is not symplectic, and hence not K\"ahler. \qed
 \end{theorem}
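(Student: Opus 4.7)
The plan is to compute $H^2(E_K,\mathbb{R})$ via the Leray--Serre spectral sequence of the principal torus fibration $\pi: E_K \to M$ with fibre $K = T^{2r}$, and then to exploit the fact that the top power of a symplectic class must be nonzero. Set $2n := \dim_{\mathbb{R}} E_K$, so that $\dim_{\mathbb{R}} M = 2n - 2r$. Since K\"ahler implies symplectic, it suffices to rule out a symplectic form on $E_K$.

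On the $E_2$-page $E_2^{p,q} = H^p(M,\mathbb{R}) \otimes H^q(T^{2r},\mathbb{R})$, the simply-connectedness of $M$ kills the column $p=1$, so $E_r^{1,1}=0$ for all $r$. The standard identification of the transgression for a principal torus bundle gives $d_2(e_i) = c_1(\mathcal{L}_i)$ on $E_2^{0,1}$, where $e_1, \dots, e_{2r}$ is the basis dual to the circle factors, so the assumed $\mathbb{R}$-linear independence of the $c_1(\mathcal{L}_i)$ makes $d_2$ injective there. Because $H^*(T^{2r},\mathbb{R})$ is the exterior algebra on $e_1, \dots, e_{2r}$ and $d_2$ is a derivation, one computes $d_2(e_i \wedge e_j) = c_1(\mathcal{L}_i) \otimes e_j - c_1(\mathcal{L}_j) \otimes e_i$, and comparing coefficients of each $e_k$ shows that $d_2$ remains injective on $E_2^{0,2}$. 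Putting these facts together, the filtration on $H^2(E_K,\mathbb{R})$ collapses to give $H^2(E_K,\mathbb{R}) = E_\infty^{2,0} = H^2(M,\mathbb{R})/\operatorname{span}_{\mathbb{R}}\{c_1(\mathcal{L}_1), \dots, c_1(\mathcal{L}_{2r})\}$, so the edge homomorphism $\pi^*: H^2(M,\mathbb{R}) \to H^2(E_K,\mathbb{R})$ is surjective.

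To finish, suppose $\omega$ were a symplectic form on $E_K$. Then $[\omega] = \pi^*\alpha$ for some $\alpha \in H^2(M,\mathbb{R})$, and hence $[\omega]^n = \pi^*(\alpha^n)$. But $\alpha^n$ lies in $H^{2n}(M,\mathbb{R}) = 0$, since $2n > 2n - 2r = \dim_{\mathbb{R}} M$, forcing $\int_{E_K} \omega^n = 0$ and contradicting the non-degeneracy of $\omega$.

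The step I expect to be most delicate is verifying injectivity of $d_2$ on $E_2^{0,2}$: the Koszul sign rule must be tracked carefully through the derivation formula, and the resulting linear system must be solved using the hypothesized independence of the Chern classes. The identification of the transgression $d_2$ on $H^1(T^{2r},\mathbb{R})$ with the first-Chern-class map is a classical fact for principal torus bundles that can simply be cited.
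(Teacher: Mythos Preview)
Your argument is correct. The spectral sequence computation is sound: with $M$ simply connected the $E_2^{1,\bullet}$ column vanishes; the transgression $d_2$ on $E_2^{0,1}$ is the Chern-class map and is injective by hypothesis; and your check that $d_2$ is injective on $E_2^{0,2}$ (by reading off the coefficient of each $e_k$ in $E_2^{2,1}$ and invoking linear independence of the $c_1(\mathcal{L}_i)$) is valid, so $E_\infty^{0,2}=0$. Hence $\pi^*$ surjects onto $H^2(E_K,\mathbb{R})$, and the standard volume argument kills any would-be symplectic class since $r\ge 1$ forces $H^{2n}(M,\mathbb{R})=0$.

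As for comparison with the paper: there is nothing to compare. The paper does not supply its own proof of this theorem; it simply attributes the result to H\"ofer and cites Proposition~11.3 and Corollary~11.4 of \cite{hofer}, marking the statement with \qed. Your spectral-sequence-plus-top-power argument is in fact the classical route to such results (and is essentially what H\"ofer does), so you have reconstructed the intended proof rather than found an alternative one.
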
 
 
 In the case when $M$ is K\"ahler, Blanchard \cite{blanchard} (cf. Section 1.7, \cite{hofer}) gives the following necessary and sufficient condition.
 
 \begin{theorem}[Blanchard]\label{kbase} Assume that $K$ is an even dimensional, compact torus and $M$ is a compact K\"ahler manifold.  Then the complex manifold $E_K$  of Theorem \ref{main} is K\"ahler if and only if 
 $ c_1 ( \mathcal{L}_i ) \in  H^2(M, \mathbb R)  $ is zero for each $i$. \qed \end{theorem}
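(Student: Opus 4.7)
\emph{Proof plan.} I would establish the two implications separately.

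\emph{Sufficiency.} Suppose each $c_1(\mathcal{L}_i)$ vanishes in $H^2(M,\mathbb{R})$. Because $M$ is compact K\"ahler, the $\partial\bar\partial$-lemma yields a Hermitian metric on each $\mathcal{L}_i$ whose Chern connection has vanishing curvature. These assemble into a flat Hermitian connection on $E_K\to M$ that is compatible with the holomorphic structure, and its holonomy is a homomorphism $\rho\colon\pi_1(M)\to T$. Consequently the pullback of $E_K$ to the universal cover $\widetilde M$ is holomorphically trivial, so $\widetilde{E_K}\cong\widetilde M\times T$ as a holomorphic principal $T$-bundle, with $\pi_1(M)$ acting by deck transformations on $\widetilde M$ and by translations via $\rho$ on $T$. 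Pick any translation-invariant K\"ahler form $\omega_T$ on $T$ (which exists because $T$ with its Samelson complex structure is a complex torus) and let $\omega_{\widetilde M}$ be the pullback of the K\"ahler form on $M$. Then $\omega_{\widetilde M}+\omega_T$ is a $\pi_1(M)$-invariant K\"ahler form on $\widetilde M\times T$ and descends to a K\"ahler form on $E_K$.

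\emph{Necessity.} Conversely, assume $E_K$ carries a K\"ahler metric. The projection $\pi\colon E_K\to M$ is a proper holomorphic submersion between compact complex manifolds whose total space is K\"ahler, so Deligne's degeneration theorem (in the form applicable to proper holomorphic submersions with K\"ahler total space) implies that the real Leray spectral sequence
\[
E_2^{p,q}=H^p(M, R^q\pi_*\mathbb{R}) \Longrightarrow H^{p+q}(E_K,\mathbb{R})
\]
degenerates at the $E_2$-page. In particular the transgression $d_2\colon H^1(T,\mathbb{R})\to H^2(M,\mathbb{R})$ vanishes. For a holomorphic principal torus bundle, $d_2$ sends the basis of $H^1(T,\mathbb{R})$ dual to the $2r$ circle factors of $T$ to the real Chern classes $c_1(\mathcal{L}_1),\ldots,c_1(\mathcal{L}_{2r})$ of the associated line bundles; hence each of these Chern classes is zero in $H^2(M,\mathbb{R})$.

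The principal obstacle is the invocation of Deligne's degeneration theorem in the compact K\"ahler (non-projective) setting, which is where Hodge theory on the K\"ahler manifold $E_K$ enters crucially; everything else is formal once this tool is available. A secondary but routine verification is the identification of $d_2$ on $H^1$ with the Chern class map, which is classical for principal torus bundles and follows from interpreting $d_2$ as the primary obstruction to lifting a fiberwise cohomology class to the total space. In the sufficiency argument, one must also check that the flat Hermitian connection produced by the $\partial\bar\partial$-lemma really is compatible with the holomorphic structure on $E_K$, so that the trivialization over $\widetilde M$ obtained by parallel transport is genuinely holomorphic; this follows because the Chern connection by construction has its $(0,1)$-part equal to $\bar\partial$.
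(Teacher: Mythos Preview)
The paper does not supply its own proof of this statement: it is attributed to Blanchard (with a pointer to H\"ofer's exposition) and marked \qed\ immediately after the statement. So there is no proof in the paper to compare against; your task was effectively to reconstruct Blanchard's argument, and your outline does this correctly.

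A small historical remark: the degeneration result you invoke for necessity---that the real Leray spectral sequence of a proper holomorphic submersion with compact K\"ahler total space degenerates at $E_2$---is precisely Blanchard's theorem (1956), which is why the statement carries his name; Deligne's contribution was a later algebraic proof via hard Lefschetz. So your argument is not just correct but is essentially Blanchard's own method. The identification of the transgression $d_2\colon H^1(T,\mathbb{R})\to H^2(M,\mathbb{R})$ with the Chern-class map is indeed classical for principal torus bundles and needs no further justification at this level. Your sufficiency argument via flat Chern connections and descent from the universal cover is also standard and correct; the point you flag about holomorphicity of the trivialization is the right one to check, and your explanation handles it.
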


  	  If $K$ is an elliptic curve,  the following gives a sufficient condition without assuming the compact complex manifold $M$ 
 to be simply-connected or K\"ahler
	  (cf. Corollary 1, \cite{Vuletescu}).  
	   	
  	\begin{theorem}[Vuletescu]\label{kbase2}
  	Assume that $K$ is an elliptic curve and at least one of the Chern classes $c_1(\mathcal{L}_i)   \in  H^2(M, \mathbb R) $  is non-zero.
 Then the complex manifold $E_K$  of Theorem \ref{main} is  a non-K\"ahler complex manifold. \qed
  	\end{theorem}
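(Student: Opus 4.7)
My plan is to argue by contradiction using the Leray--Serre spectral sequence of the principal $K$-bundle $\pi\colon E_K \to M$ that Theorem \ref{main} furnishes. Since $K$ is an elliptic curve, it is abelian and as a real Lie group $K \cong T^2$, so topologically $E_K \to M$ is a principal $T^2$-bundle classified (up to torsion) by the two Chern classes $c_1(\mathcal{L}_1), c_1(\mathcal{L}_2) \in H^2(M, \mathbb{Z})$. The goal will be to show that the existence of a K\"ahler form on $E_K$ forces both Chern classes to vanish in $H^2(M,\mathbb{R})$, contradicting the hypothesis.

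The first step is to write down the Serre spectral sequence $E_2^{p,q} = H^p(M, \mathbb{R}) \otimes H^q(K, \mathbb{R}) \Rightarrow H^{p+q}(E_K, \mathbb{R})$; the local system is trivial because $T^2$ is connected. Writing $H^*(K, \mathbb{R}) = \Lambda(e_1, e_2)$ with $e_i \in H^1(K, \mathbb{R})$, I invoke the classical identification of the transgression in a principal torus bundle with the Chern classes of the associated circle bundles, namely $d_2(e_i) = c_1(\mathcal{L}_i)$. This is the one nontrivial input; it reduces via naturality (pulling back from $BT^2$) and the splitting $BT^2 = BS^1 \times BS^1$ to the Gysin sequence of a principal $S^1$-bundle, so it is standard.

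Next I apply the Leibniz rule to obtain $d_2(e_1 \wedge e_2) = c_1(\mathcal{L}_1) \otimes e_2 - e_1 \otimes c_1(\mathcal{L}_2)$ in $E_2^{2,1} = H^2(M, \mathbb{R}) \otimes H^1(K, \mathbb{R})$. Under our hypothesis that at least one Chern class is nonzero, this is nonzero, so the generator $e_1 \wedge e_2$ of $H^2(K, \mathbb{R})$ does not survive to $E_\infty^{0,2}$. Consequently the edge homomorphism $i^*\colon H^2(E_K, \mathbb{R}) \to H^2(K, \mathbb{R})$ induced by a fiber inclusion $i\colon K \hookrightarrow E_K$ is identically zero.

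To close the argument, suppose for contradiction that $E_K$ admits a K\"ahler form $\omega$. Since $K$ is a compact complex submanifold of $E_K$, the restriction $i^*\omega$ is a K\"ahler form on $K$, so $\int_K i^*\omega > 0$ and hence $[i^*\omega] \neq 0$ in $H^2(K, \mathbb{R})$, contradicting the vanishing of $i^*$ established in the previous step. I do not foresee substantial obstacles; the only step requiring outside input is the transgression-equals-Chern-class computation, which is classical and can simply be cited.
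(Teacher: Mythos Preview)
Your argument is correct. The paper does not give its own proof of this statement; it is stated with a \qed\ and attributed to Vuletescu (Corollary~1 of \cite{Vuletescu}), so there is no in-paper proof to compare against. Your Serre spectral sequence argument is a clean, self-contained alternative: the key points---that the transgression $d_2(e_i)=c_1(\mathcal{L}_i)$, that the Leibniz rule forces $d_2(e_1\wedge e_2)\neq 0$ once some $c_1(\mathcal{L}_i)\neq 0$ (since $e_1,e_2$ are linearly independent in $H^1(K;\mathbb{R})$), and hence that the fiber restriction $i^*$ vanishes on $H^2$---are all correct, and the final contradiction uses only that the fibers are complex submanifolds, which Theorem~\ref{main} guarantees in the abelian case. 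One small wording point: the triviality of the local system follows because in a principal bundle the monodromy acts through the structure group, and a connected group acts trivially on its own cohomology; your phrasing ``because $T^2$ is connected'' is correct but you might make the reason explicit.
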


  \begin{example}\label{toric} Consider a nonsingular  toric variety $M$ of complex dimension $n$. 
  Assume that the one dimensional cones in the fan $\Sigma$ of $M$ are generated by primitive integral vectors 
  $\rho_1, \ldots, \rho_k$. Let $R$ be the matrix $[\rho_1 \ldots \rho_k]$.
   Let $\mathcal{E}_j $ be the  line bundle  over $M$ corresponding to the $j$-th one dimensional cone.  
   Then any algebraic line bundle over $M$ is of the form $\bigotimes \mathcal{E}_j^{a_j} $ where each $a_j$ is an integer.
   
   The first Chern class of such a line bundle is given by
   $$c_1( \bigotimes \mathcal{E}_j^{a_j} ) = \sum a_j c_1 (\mathcal{E}_j) \,. $$ It is zero if and only if the vector $ (a_1, \ldots, a_k)$ belongs 
   to the row space of the matrix $R$.
   
   Consider an algebraic vector bundle $E \rightarrow M$ which is the direct sum of an even number, namely $2r$, of line bundles. 
   Let $K = (S^1)^{2r}$. Let $E_K \rightarrow M$ denote the smooth principal $K$-bundle obtained by reduction of structure group from 
  the principal holomorphic $(\mathbb{C}^*)^{2r}$-bundle associated to $E$.  The total space  $E_K$  of this bundle admits a family of
  complex analytic structures by Theorem \ref{main}.
  
   If  $M$ is projective, then Theorem \ref{kbase} gives a sufficient condition
  for the total space $E_K$ to be non-K\"ahler.  The moment angle manifold corresponding to the fan $\Sigma$ (cf. \cite{PU})
  is an example of  such an $E_K$. 
   \end{example}

    Now, we discuss  the case when the compact Lie group $K$ is odd dimensional.
 We refer to \cite{blair} for the definition of a normal almost contact structure (nacs) on a smooth manifold. We have the following result.
    
  \begin{corollary}\label{odd}
  	Let $K$ be an odd dimensional compact connected Lie group. Let $K \times S^1$ be endowed with a left invariant complex structure. If  a smooth principal $K$-bundle $E_K \rightarrow M$ over a complex manifold $M$  admits a complexification,  then the bundle  $E_K \times S^1 \rightarrow M$ has the structure
of a holomorphic fiber bundle with fibre $K \times S^1$. In particular, the space $E_K$ admits a normal almost contact structure.
  \end{corollary}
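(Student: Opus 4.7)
The natural idea is to convert the odd-dimensional problem into an even-dimensional one by taking the product with $S^1$, and then invoke Theorem~\ref{main}. Precisely, I would form the smooth principal $(K \times S^1)$-bundle
\begin{equation*}
E_K \times S^1 \;=\; E_K \times_M (M \times S^1) \;\longrightarrow\; M,
\end{equation*}
where the right $(K \times S^1)$-action is the diagonal one, $(e,\theta)\cdot(k,s)=(e\cdot k,\;\theta s)$. Since $K$ is connected and odd-dimensional, $K \times S^1$ is a connected even-dimensional compact Lie group, which is the setting in which Theorem~\ref{main} is formulated.

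Next I would verify that this enlarged bundle admits a complexification. The complexification of $K \times S^1$ is $K_{\mathbb{C}} \times \mathbb{C}^*$, and the associated principal bundle is canonically isomorphic to the fibered product
\begin{equation*}
(E_K \times_K K_{\mathbb{C}}) \times_M (M \times \mathbb{C}^*).
\end{equation*}
By hypothesis the first factor is a holomorphic principal $K_{\mathbb{C}}$-bundle over $M$, and the second is the trivial holomorphic $\mathbb{C}^*$-bundle over $M$; their fibered product therefore carries a natural structure of holomorphic principal $(K_{\mathbb{C}} \times \mathbb{C}^*)$-bundle. Together with the prescribed left-invariant complex structure on $K \times S^1$, Theorem~\ref{main} now applies and yields the asserted holomorphic fiber bundle structure on $E_K \times S^1 \to M$ with fiber $K \times S^1$.

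For the final assertion, I would invoke the classical equivalence (going back to Morimoto) that an odd-dimensional smooth manifold $N$ admits a normal almost contact structure precisely when $N \times S^1$ admits an integrable complex structure for which the $S^1$-action rotating the second factor is holomorphic. In our situation $\{1\} \times S^1$ is central in $K \times S^1$, so left and right translations by its elements coincide; hence any left-invariant complex structure on $K \times S^1$ is automatically invariant under the right action of this central $S^1$. Via Proposition~\ref{prop1}, whose defining foliation on $E_G$ is left $G$-invariant by construction, this $S^1$-equivariance is transported to $E_K \times S^1$, so the $S^1$-action along the second factor acts holomorphically on the total space. Identifying $E_K$ with the quotient $(E_K \times S^1)/(\{1\} \times S^1)$, Morimoto's theorem then produces the desired nacs on $E_K$. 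The only real point requiring care is this last equivariance check; everything else is a direct application of the machinery already developed in Section~\ref{mr}.
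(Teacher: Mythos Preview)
Your argument for the first assertion is essentially identical to the paper's: form the principal $(K\times S^1)$-bundle $E_K\times S^1\to M$, observe that its complexification is $E_{K_{\mathbb C}}\times\mathbb C^*\to M$, and invoke Theorem~\ref{main}. The paper states this in one line; your fibered-product description unpacks the same identification.

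For the normal almost contact structure the two arguments diverge slightly. The paper simply notes that $E_K\hookrightarrow E_K\times\{0\}\subset E_K\times\mathbb R$ realizes $E_K$ as an orientable real hypersurface of a complex manifold, and cites the relevant passages of Blair's book (Example~4.5.2 and Section~6.1) to conclude that this yields a nacs. You instead invoke Morimoto's characterization---a nacs on $N$ corresponds to an $S^1$-invariant integrable complex structure on $N\times S^1$---and then check that the $\{1\}\times S^1$-action on $E_K\times S^1$ is holomorphic. Your equivariance verification is correct: $\{1\}\times S^1$ lies in the center of $K_{\mathbb C}\times\mathbb C^*$, so its right action on $E_{K_{\mathbb C}}\times\mathbb C^*$ is holomorphic and commutes with the right $L$-action, hence descends holomorphically to $(E_{K_{\mathbb C}}\times\mathbb C^*)/L\cong E_K\times S^1$. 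The two routes are close cousins (Morimoto's criterion is precisely what underlies the hypersurface construction in the translation-invariant case), but the paper's is shorter because it bypasses the equivariance check entirely. Your approach has the mild advantage of making the invariance explicit, which is exactly what one needs if one later wants to identify the Reeb field with the infinitesimal $S^1$-action.
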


  \begin{proof} Let $E_{K_{\mathbb{C}}}  \rightarrow M$ denote the complexification of $E_K \rightarrow M$. 
Then  $E_{K_{\mathbb{C}} } \times \mathbb{C}^*   \rightarrow M$ is the complexification of the principal 
 $(K \times S^1)$-bundle  $E_K \times S^1 \rightarrow M$.  Since  $K \times S^1$ is even dimensional, the proof of the first part of the corollary
follows  from Theorem \ref{main}. The second part of the corollary follows from the fact that the inclusion map 
$E_K \to E_K \times \{0\} \subset E_K \times \mathbb R$ is an embedding of $E_K$ as an orientable real hypersurface of the complex manifold $E_K \times \mathbb R$ (cf.  Example 4.5.2 and Section 6.1 of \cite{blair}).
	\end{proof}
	
With regard to the above corollary, we note that a nacs endows a CR-structure of hypersurface type on $E_K$ (cf. Theorem 6.6 \cite{blair}).
	
\begin{remark}\label{oddcase} Let $K$ and $E_K$ be as in Corollary \ref{odd}.	If $K$ is non-abelian, then $E_K \times S^1$  is  a non-K\"ahler complex manifold. On the other hand, if $K$ is a compact torus, then Theorems  \ref{kbase} and 
\ref{kbase2} give some  conditions for $E_K \times S^1$ to be  non-K\"ahler.
\end{remark}

\section{Another approach to complex structures on $E_K$}\label{ap} 

There is another approach to constructing complex structures on the total space of a principal bundle using an $(1,0)$-connection with 
$(1,1)$-curvature. This was used very profitably in \cite{ggp} for principal torus bundles. Indeed, we will make use of their approach and results
 in the following section. Here we give an analogous construction for more general structure groups.  
We thank the referee for drawing our attention to this possibility. 

However, there is a significant difference between the torus case and the general case. 
In the torus case, the group operations  of $K$  are holomorphic with respect to the complex structure induced by a Samelson algebra as 
the condition $[JA,B]= J[A,B] $ is trivially satisfied. This does not hold in the general case.  

\begin{lemma} Let $K$ be endowed with a complex structure as in  Proposition \ref{semisimple}. Suppose $E_K \rightarrow M $ is a smooth principal $K$-bundle over a complex manifold $M$. Assume that $E_K \rightarrow M$ is endowed with an almost complex fiber bundle structure that agrees with the integrable complex structures on $M$ and $K$.
 If $E_K \rightarrow M $ admits a smooth $(1,0)$-connection with $(1,1)$-curvature, then the almost complex structure on $E_K$ is integrable, 
 and $E_K \rightarrow M $ admits the structure of a 
holomorphic fiber bundle.     
\end{lemma}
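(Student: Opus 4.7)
My plan is to apply the Newlander--Nirenberg theorem by showing that the $(0,1)$-tangent distribution on $E_K$ is involutive. First I would use the connection $\theta$ to split $TE_K = H \oplus V$, with $H = \ker\theta$ and $V$ the vertical tangent bundle. The $(1,0)$-condition on $\theta$ translates, via the Samelson decomposition $\mathfrak{k}_{\mathbb{C}} = \mathfrak{l} \oplus \bar{\mathfrak{l}}$, into the statement that the $\mathfrak{l}$-valued component $\theta^{\mathfrak{l}}$ of the complexified connection form is a $(0,1)$-form on $E_K$, equivalently that $H$ is $J$-invariant. Combined with the compatibility of $J$ with the vertical Samelson complex structure, this gives $T^{0,1}E_K = H^{0,1} \oplus V^{0,1}$, with $V^{0,1}_p = \{X^*_p : X \in \mathfrak{l}\}$ and $H^{0,1}$ the horizontal lift of $T^{0,1}M$.

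Next I would verify $[T^{0,1}E_K, T^{0,1}E_K] \subset T^{0,1}E_K$ on a local spanning set consisting of the fundamental vector fields $X^*$ with $X \in \mathfrak{l}$ and the horizontal lifts $\widetilde{Y}^H$ of $(0,1)$-vector fields on $M$. (a) For two vertical generators, $[X^*, Y^*] = \pm [X,Y]^*$ remains in $V^{0,1}$ because $\mathfrak{l}$ is closed under the Lie bracket (the Samelson condition). (b) For a vertical $X^*$ and a horizontal lift $\widetilde{Y}^H$, the $K$-equivariance of horizontal lifts forces $[X^*, \widetilde{Y}^H] = 0$. (c) For two horizontal lifts $\widetilde{Y}^H, \widetilde{Z}^H$, the horizontal component of the bracket is the horizontal lift of $[\widetilde{Y}, \widetilde{Z}]$, which is of type $(0,1)$ by integrability of $J_M$; the vertical component is controlled by the structure equation, which on a pair of horizontal vectors reduces to $\Omega(\widetilde{Y}^H, \widetilde{Z}^H) = -\theta([\widetilde{Y}^H, \widetilde{Z}^H])$, and this vanishes by the $(1,1)$-hypothesis on the curvature.

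Newlander--Nirenberg then yields integrability of $J$. Since $\pi$ intertwines $J$ with $J_M$ by the almost complex fiber bundle hypothesis, and each fiber inherits the Samelson complex structure on $K$, $\pi : E_K \to M$ becomes a holomorphic fiber bundle. The conceptual obstacle flagged in the remark just before the lemma is that for non-abelian $K$ the right $K$-action on $E_K$ is not holomorphic, since the Samelson complex structure on $K$ is only left-invariant; consequently $E_K \to M$ is merely a holomorphic fiber bundle and not a holomorphic principal bundle, so one cannot argue via holomorphic $K$-valued transition cocycles. The virtue of the Newlander--Nirenberg approach is that steps (a)--(c) use only the Lie-algebraic Samelson condition, the smooth equivariance of horizontal lifts, and the vanishing of the nonabelian correction $\frac{1}{2}[\theta,\theta]$ on horizontal inputs, all of which are insensitive to the non-holomorphy of the right $K$-action.
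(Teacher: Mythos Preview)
Your proof is correct and follows essentially the same route as the paper's: both verify integrability via Newlander--Nirenberg by checking the three bracket cases (vertical--vertical via the Samelson subalgebra being closed, vertical--horizontal via $K$-equivariance of horizontal lifts giving a zero bracket, horizontal--horizontal via the $(1,1)$-curvature hypothesis killing the vertical component). The only cosmetic difference is that you work with $T^{0,1}$ and $\mathfrak{l}$ while the paper works with $T^{(1,0)}$; your setup of the decomposition and your closing commentary on the failure of the right $K$-action to be holomorphic are more explicit than the paper's treatment, but the mathematical content is the same.
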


\begin{proof}
The argument is similar to Lemma 1 of \cite{ggp} .

If $V,\, W$ are vertical $(1,0)$ vector fields on $E_K$, then it is clear that $[V,W]$ is $(1,0)$ since $J$ is integrable on fibers.

 Now suppose 
$A^h, \, B^h$ are horizontal lifts of $(1,0)$ vector fields $A,\, B$ on $M$.  
Then, the curvature $\Omega(A, B) = - \omega ([A^h, B^h])$, where $\omega$ denotes the connection $1$-form.  Since 
$\Omega$ is of type $(1,1)$,  $\Omega(A,B) = 0$. Hence  $[A^h, B^h] $ is horizontal, implying that  $[A^h, B^h]  = [A,B]^h$.  However, $[A,B]^h$ is $(1,0)$ 
as $[A,B]$ and the connection are $(1,0)$. Therefore,
$[A^h, B^h]$ is $(1,0)$. 

 Finally, if $V$ is a vertical $(1,0)$ vector field and $A^h$ is the horizontal lift of a $(1,0)$ vector field $A$ on $M$, then 
$[V, A^h] = 0$ (and therefore (1,0)), as horizontal vector fields are preserved by the action of the structure group $K$.  This shows that 
$(TE_K)^{(1,0)}$ is involutive with respect to the Lie bracket. Hence $J$ is integrable.  It follows immediately that $E_K \rightarrow M$ is a holomorphic fiber 
bundle.
\end{proof}

Given  a
 holomorphic principal bundle $E_{K_{\mathbb{C}}} \rightarrow M$, consider a natural embedding of $E_K $ in $E_{K_{\mathbb{C}}}$ that corresponds to the inclusion $K \hookrightarrow K_{\mathbb{C}}$.
 A well-known result of Singer \cite{Singer} says that there exists a  smooth $(1,0)$-connection $\nabla$ with $(1,1)$-curvature on  
 $E_{K_{\mathbb{C}}} \rightarrow M$  which  restricts  to a  connection on $E_K \rightarrow M$. This restriction is 
 $(1,0)$ with $(1,1)$-curvature if the complex structure on $K_{\mathbb{C}}$ restricts to an invariant almost complex structure on $K$. This holds when  $K$ is a torus for suitable choice of complex structure: Let $J$ be 
 an integrable complex structure  on $K$ determined by a Samelson algebra. The scalar extension of $J$
 determines an integrable complex structure on $K_{\mathbb{C}}$. However, as remarked earlier,
the group operations on  $K_{\mathbb{C}}$ may not be holomorphic with respect to  $J$ when $K$ is not a torus.

\section{SKT structures on the total space of principal bundles}\label{shs}

Let  $K$ be a connected compact Lie group. Let  $T$, $H$, $B$,  $\exp(\mathfrak{a})$, $U$ and $L$ be the subgroups of 
$K_{\mathbb{C}}$ as in Section \ref{cs}. For notational convenience, we denote $K_{\mathbb{C}}$ by $G$ in this section.

Let $E_K \to M$ be a smooth principal $K$-bundle that admits a complexification $E_G \to M$. The Borel subgroup
$B=HU$ acts holomorphically on the total space $E_G$.
As $U$ is normal in $B$, the  subgroup $H$ acts holomorphically
on $E_H:= E_G /U$. Thus we have a holomorphic principal $H$-bundle $E_H \to E_G/B$.  We denote the complex 
manifold $E_G/B$ by $X$.

The quotient of the action of the subgroup $\exp(\mathfrak{a})$ of $H$  on $E_H$  is isomorphic to $E_G/L$. As
$H= \exp(\mathfrak{a})T$ is abelian, $E_G/L$ admits a holomorphic action of $T$. We identify $E_K$ with $E_G/L$ as
complex manifolds as in Section \ref{mr}. 

The above has two consequences: Firstly, the quotient of $E_K$ by a  finite subgroup of 
$T$ admits a complex structure. This produces more examples of non-K\"ahler complex manifolds. 
Secondly, the smooth principal $T$ bundle $E_K \to X$ admits a complexification $E_H \to X$. This is related to the existence of 
special Hermitian structures on $E_K$ as described below.  

The Bismut or KT connection (cf. \cite{Bis, ggp}) determined by a Hermitian metric $g$ on a complex manifold  is the Hermitian connection 
with skew symmetric torsion. The KT connection is called strong (or SKT) if  the  fundamental form of $g$ is $dd^c$-closed. 
On the other hand, it is called (locally) CYT if the restricted holonomy is in $SU$. It is known that every compact Lie group $K$ of even dimension admits an SKT structure (cf. \cite{MS}). 
 Moreover,  the Calabi-Eckmann manifolds $S^1 \times S^1$, $S^1 \times S^3$ and $S^3 \times S^3$ admit SKT structure. The remaining Calabi-Eckmann manifolds, $S^{2m-1} \times S^{2n-1}$, $m,n >1$, do not admit SKT structures (cf. Example 5.17 \cite{cavalcanti}). In the case of principal torus bundles with even dimensional fibers, a number of interesting constructions and results were obtained in \cite{ggp} for both SKT and CYT structures.  We will now relate our construction to their results. 
 
As noted above, we may consider $E_K$ as the total space of the principal $K$-bundle $E_K \to M$ or the principal $T$-bundle 
$E_K \to X$, which admit complexifications $E_G \to M$ and $E_H \to X$ respectively. 
    It is to be noted that the complex structures on $E_K$ derived from the two complexifications, namely, the  
identifications of $E_K$ with $E_G/L$ 
and $(E_G/U)/ \exp(\mathfrak{a})$, coincide.

  Let $J$
denote the left invariant complex structure on $T$, or equivalently, the almost complex structure on $\mathfrak{t}$,
 corresponding to a Samelson subalgebra $\mathfrak{a}$ of $\mathfrak{h}$.  Denote the integrable complex structure on $E_G$ as well as the induced complex structure on  $E_H=E_G/U$ by $I$.  The latter  matches with the bi-invariant complex structure of 
 $ H $, inherited as a subgroup of $G$, on the fibers of $E_H \to X$.

Then, as Singer  explains in \cite{Singer}, there  exists a  smooth $(1,0)$ connection $\nabla$ on $E_H$ whose horizontal 
spaces are $I$-stable
and whose curvature is $(1,1)$. 

 Consider the bundle
  $E_K \rightarrow X $ as the quotient of $E_H \rightarrow X$ by $\exp(\mathfrak{a})$.  The connection $\nabla$ induces a  connection  $\nabla'$ on the smooth principal bundle $E_K \rightarrow X $ whose connection  $1$-form will be denoted by  $\theta$.
    The horizontal spaces of $\theta$ are complex with respect to the  complex structure on $E_K \rightarrow X$ induced by the 
    identification with quotient of $E_H \rightarrow X$.  The holomorphic projection map $\pi: E_K \rightarrow X$ induces complex analytic isomorphisms between the horizontal spaces 
  of $\nabla'$ and the corresponding tangent spaces of $X$.  
  Also, $\theta$ is $(1,0)$ and has $(1,1)$ curvature with respect to the induced  complex structure (cf. proposition 6.1, \cite{KN}).
   By Section \ref{cs},  the induced  complex structure matches the complex structure $J$ in the vertical direction.  
The connection $\theta$ being $(1,0)$ means that 
\begin{equation}\label{jj} \theta (J\xi ) = J \theta (\xi) \end{equation}
for any vertical tangent vector $\xi \in TE_K$.

Consider a product decomposition of the group $T= \prod_{j=1}^{2r} S^1$. (For later use, this should correspond to the diagonal blocks of
the standard linear representation of $T$ as a subgroup of $G$ when $K$ is a classical Lie group other than SU(n).) 
 Let $\pi_j: \mathfrak{t} \rightarrow \mathbb{R}(\partial_{t_{j}})$ denote the corresponding 
$j$-th coordinate projection at the level of Lie algebras.   
Define $$\theta_{j} := \pi_j \circ \theta  \,.$$

Now, we choose a special kind of complex structure on $E_K$ that allows  decomposition of the complex torus $T$ into a product of 
elliptic curves.
Since $T$ is abelian, any almost complex structure on $\mathfrak{t}$ corresponds to a Samelson subalgebra. We choose the complex 
structure $J$ on $\mathfrak{t}$ so that $J (\partial_{t_{2j-1}}) = \partial_{t_{2j}}$.  Then \eqref{jj} implies that 
$$J\theta_{2j-1} =\theta_{2j}\,. $$ 

Thus the connection $\theta$ satisfies all the properties  of  the connection constructed in Lemma 1 of \cite{ggp}. 
Therefore, the complex structure on $E_K$ constructed there matches the complex structure from the quotient construction, with the choice of
$J$ as above. 

 More generally,
Lemma 1 of \cite{ggp} considers  principal torus bundles  with even dimensional fiber and $(1,1)$-curvature. If the base manifold is 
compact, K\"ahler and the curvature is integral, then by the Lefschetz theorem on $(1,1)$ classes, any such bundle can be obtained from  a direct sum of holomorphic line bundles by the quotient construction.   In fact, most applications in \cite{ggp} assume these conditions.

Following \cite{ggp},
given any Hermitian metric $g_X$ on the base $X$,  define a Hermitian metric $g_E$ on the total space of $E_K$, by 
$$ g_E := \pi^* g_X + \sum_{j=1}^{2r} (\theta_j \otimes \theta_j)\,. $$ 
Consider the KT connection on $E_K$ induced by $g_{E}$.  Propositions 9 and 11 of \cite{ggp} give some relatively simple conditions for this connection to be 
(locally) CYT. 
In the sequel, we will focus on SKT structures. 
By Proposition 14 of \cite{ggp},  
 the above connection is SKT if and only if 
 \begin{equation}\label{skt} \sum_{j=1}^{2r} \omega_j \wedge \omega_j =dd^c F_X \,, \end{equation} 
where $\pi^* \omega_j = d\theta_j$, $d^c= I^{-1}dI$ and $F_X$ is the fundamental form of $g_X$.
We apply this to obtain new families of SKT structures on $K$ when $K$ is an unitary, special orthogonal or compact symplectic 
group of even dimension. 

\begin{lemma}\label{grc} Let $K$ be an even dimensional unitary, special orthogonal or compact symplectic group. Then $K$ admits a
family of SKT structures parametrized by an open subset of the space of K\"ahler metrics on $K/T$ 
where $T$ is a maximal torus in $K$. \end{lemma}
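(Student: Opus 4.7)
The plan is to specialize the construction of this section to the case $M = \{\text{pt}\}$, so that $E_K = K$ itself and the intermediate base is the full flag manifold $X = K/T \cong K_{\mathbb{C}}/B$. For any K\"ahler metric $g_X$ on $X$, the preceding discussion produces a $(1,0)$ connection $\theta$ on $K \to X$ with curvature components $\omega_j$ ($j = 1, \ldots, 2r$), and a Hermitian metric $g_E = \pi^* g_X + \sum_j \theta_j \otimes \theta_j$ on $K$. The goal is to verify the SKT condition \eqref{skt} for $g_E$.

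The first observation is that when $g_X$ is K\"ahler, $dd^c F_X = 0$: since $F_X$ is real, of type $(1,1)$, and $d$-closed, both $\partial F_X$ and $\bar{\partial} F_X$ vanish by bidegree, so $d^c F_X = 0$ and hence $dd^c F_X = 0$. Thus \eqref{skt} reduces to showing that $\sum_{j=1}^{2r} \omega_j \wedge \omega_j = 0$ as a $(2,2)$-form on $X$. I take $\theta$ to be the canonical $K$-invariant connection on $K \to K/T$ coming from the $\mathrm{Ad}(T)$-invariant decomposition $\mathfrak{k} = \mathfrak{t} \oplus \mathfrak{m}$; with respect to the Samelson complex structure, $\mathfrak{m}$ is $J$-invariant and the curvature evaluates to $[\cdot,\cdot]_{\mathfrak{t}}$ on the root spaces, so $\theta$ is of type $(1,0)$ with $(1,1)$-curvature. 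Its components $\omega_j$ are then the unique $K$-invariant $(1,1)$-representatives of the first Chern classes $c_1(L_j) = x_j$, where $L_j$ is the line bundle on $X$ associated to the $j$-th character of $T$.

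Next I invoke Borel's theorem: $H^*(K/T;\mathbb{R}) \cong \mathbb{R}[x_1,\ldots,x_{2r}]/I^+_W$, where $I^+_W$ is the ideal generated by Weyl invariants of positive degree. For $K = U(2r)$ the Weyl group is $S_{2r}$, so the elementary symmetric polynomials $\sigma_i$ vanish in $H^*(X)$, giving
\begin{equation*}
\sum_j x_j^2 \,=\, \sigma_1^2 - 2\sigma_2 \,=\, 0.
\end{equation*}
For $K = Sp(2r)$, $SO(4r)$, or $SO(4r+1)$ the Weyl group consists of (even) signed permutations, so the degree-two Weyl invariant is precisely $\sum_j x_j^2$, which therefore vanishes in $H^*(X)$. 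Hence $\bigl[\sum_j \omega_j^2\bigr] = 0 \in H^4(X;\mathbb{R})$ in all the stated cases.

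The final step uses a standard fact about compact K\"ahler homogeneous spaces: on $X = K/T$ the natural map from the algebra of $K$-invariant differential forms to de Rham cohomology is an isomorphism, so every $K$-invariant form representing the zero class vanishes identically. Applied to the $K$-invariant $(2,2)$-form $\sum_j \omega_j^2$, this forces $\sum_j \omega_j^2 = 0$ as a form, so \eqref{skt} is satisfied and $g_E$ is SKT for every K\"ahler $g_X$ on $X$. The space of K\"ahler metrics on $X$ is an open cone, and within it the $K$-invariant K\"ahler metrics are parametrized by the interior of the positive Weyl chamber in $\mathfrak{t}^*$, yielding the asserted open family. The principal technical obstacle is the uniform verification of the degree-two Weyl-invariant identity for the three classical families together with the clean identification of $\omega_j$ with the canonical invariant representative of $c_1(L_j)$; once these ingredients are in place, the remainder is cohomological bookkeeping.
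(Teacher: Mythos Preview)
Your argument breaks down at the step where you claim that on $X=K/T$ every $K$-invariant closed form that is exact must vanish identically. This is false for full flag manifolds of rank $\ge 2$. For instance, on $SU(3)/T^2$ the complex of $K$-invariant forms has total dimension $10$ (grading $1,0,3,2,3,0,1$) while $\dim H^*(K/T)=|W|=6$; so the Chevalley--Eilenberg differential on invariant forms is nontrivial, and in degree $4$ there is a one-dimensional space of invariant forms that are exact but nonzero. Concretely, for the canonical connection one has $\Omega=-\sum_{\alpha>0}H_\alpha\, e_\alpha^*\wedge e_{-\alpha}^*$, hence
\[
\sum_j \omega_j\wedge\omega_j \;=\; \sum_{\alpha\neq\beta>0}\langle H_\alpha,H_\beta\rangle\, e_\alpha^*\wedge e_{-\alpha}^*\wedge e_\beta^*\wedge e_{-\beta}^*,
\]
which is nonzero whenever there exist non-orthogonal positive roots --- i.e.\ always, for the groups in the lemma beyond the trivial low-rank cases. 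Thus the SKT equation \eqref{skt} is \emph{not} satisfied with a K\"ahler $g_X$, since then $dd^cF_X=0$ but $\sum_j\omega_j^2\neq 0$. (This is also consistent with the wording of the lemma: the family is parametrized by an \emph{open subset} of K\"ahler metrics, not by all of them, which would follow from your argument.)

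The paper's proof proceeds differently. It establishes, as you do, that $[\sum_j\omega_j^2]=0$ in $H^4(K/T;\mathbb{R})$ via Borel's description of $H^*(K/T)$ and the Weyl invariance of $\sum x_j^2$. But instead of concluding that the form itself vanishes, it invokes the $dd^c$-lemma on the K\"ahler manifold $K/T$: since $\sum_j\omega_j^2$ is a $d$-exact real $(2,2)$-form, it is $d^c$-closed, hence $\sum_j\omega_j^2=dd^c\alpha$ for some real $(1,1)$-form $\alpha$. One then chooses a sufficiently large multiple $\beta$ of a K\"ahler form so that $F_X:=\alpha+\beta$ is positive; the resulting Hermitian metric $g_X$ is \emph{not} K\"ahler, but satisfies $dd^cF_X=\sum_j\omega_j^2$, which is exactly \eqref{skt}. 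The K\"ahler form $\beta$ is the parameter, and the positivity condition on $\alpha+\beta$ cuts out the open subset in the statement.
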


\begin{proof} Consider $K$ as a principal $T$-bundle over the K\"ahler manifold $X=  G/B \cong  K/T $. It is well known (cf. \cite{borel})  that 
$$H^{*}(K/T) \cong \frac{H^{*}(BT)}{( H^{+}(BT))^{W(K)}} \,$$
where the $H^{+}$ denotes the cohomology in positive degrees. The image of the generators of $H^{+}(BT)$  in $H^{*}(K/T)$ 
can be identified with certain roots of $K$ or equivalently with the
Chern classes $[w_i]$, up to sign, of the corresponding $S^1$-principal bundles over $X$. The invariants of the adjoint action of the normalizer $N_T(K)$  (equivalently, the invariants of the action of the Weyl group
$W(K)$)  on these roots are described in \cite{KN2}. It is then easily verified that the class of $\sum w_j\wedge w_j $ in $H^{*}(X)$ is $0$.

Thus the (2,2)-form $\sum w_j\wedge w_j $ is $d$-exact and $d$-closed. Denote  the complex structure on $X$ by 
 $I$ without confusion. Since $I$ preserves $(2,2)$-forms, it follows that $\sum w_j\wedge w_j $
is $d^c$-closed. Hence by the $dd^c$-lemma, there exists a real $(1,1)$-form $\alpha$ on $X$ such that $\sum w_i \wedge w_i = dd^c\alpha $.
Then, as in Theorem 15 of \cite{ggp},  we choose an appropriate multiple $\beta$ of a K\"ahler form on $X$ such that
$$\min_{p\in X}(\min_{|| Y || =1} \beta_p(Y,IY) ) > - \min_{p\in X}(\min_{|| Y || =1} \alpha_p(Y,IY) )\,.$$ 
Then the positive definite form $\alpha+\beta$ defines a Hermitian metric $g_X$ on $X$ which satisfies \eqref{skt}. \end{proof}
   
  \begin{theorem}\label{pbc} Let $K$ be an even dimensional unitary, special orthogonal or compact symplectic group. Suppose $E_K \to M$ is a principal $K$-bundle over a projective manifold $M$, which admits a complexification $E_G \rightarrow M$.  Then $E_K$ admits an SKT structure if
  \begin{itemize} \item $c_1^2(E_K \to M) = c_2(E_K \to M)$ when $K$ is unitary,
\item $p_1(E_K \to M)= 0$ when $K$ is either special orthogonal or compact symplectic.
  \end{itemize} 
Here $c_i$ and $p_i$ refer to the appropriate Chern and Pontryagin classes.   \end{theorem}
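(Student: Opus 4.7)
The plan is to realize $E_K$ as the total space of the principal torus bundle $E_K \to X$, where $X := E_G/B$ (with complexification $E_H := E_G/U \to X$), and then apply the SKT criterion \eqref{skt} of Proposition 14 of \cite{ggp}, mimicking the proof of Lemma \ref{grc}. Since $M$ is projective and the generalized flag variety $K/T \cong G/B$ is projective, the total space $X$ of the fiber bundle $X \to M$ is projective. In particular $X$ is K\"ahler, and the $dd^c$-lemma holds on $X$.

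The crux is to show that the $(2,2)$-form $\Phi := \sum_{j=1}^{2r} \omega_j \wedge \omega_j$ represents the zero class in $H^4(X;\mathbb{R})$. Granted this, the $dd^c$-lemma produces a real $(1,1)$-form $\alpha$ on $X$ with $\Phi = dd^c\alpha$. Taking $\beta$ to be a sufficiently large positive multiple of a K\"ahler form on $X$, as in the proof of Lemma \ref{grc}, makes $F_X := \alpha + \beta$ positive definite. The corresponding Hermitian metric $g_X$ satisfies \eqref{skt}, so by Proposition 14 of \cite{ggp} the induced metric $g_E = \pi^* g_X + \sum_j \theta_j \otimes \theta_j$ on $E_K$ is SKT.

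To identify the class $[\Phi]$ with the stated hypotheses, observe that on $X$ the principal $H$-bundle $E_H$ decomposes via the characters of $H$ into holomorphic line bundles $\mathcal{L}_1, \ldots, \mathcal{L}_{2r}$ with $[\omega_j] = c_1(\mathcal{L}_j)$ (up to sign). When $K$ is unitary, the direct sum $\bigoplus_j \mathcal{L}_j$ is the pullback along $\pi_X : X \to M$ of the vector bundle associated to the standard representation; Newton's identities then express $\sum_j [\omega_j]^2$ as a linear combination of $\pi_X^* c_1^2$ and $\pi_X^* c_2$. When $K$ is special orthogonal or compact symplectic, the weights of $T$ on the complexified standard representation come in $\pm$ pairs, and a parallel computation expresses $\sum_j [\omega_j]^2$ as a multiple of $\pi_X^* p_1(E_K \to M)$, using that the first Chern class of the complexification of a real or quaternionic bundle vanishes. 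Since $K/T$ has cohomology concentrated in even degrees, Leray-Hirsch implies that $\pi_X^* : H^*(M;\mathbb{R}) \to H^*(X;\mathbb{R})$ is injective; the stated hypotheses then force $[\Phi] = 0$.

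The main technical obstacle is the bookkeeping of signs, factors of two, and the precise identification of the $\omega_j$ (from the chosen product decomposition of $T$) with weights of the standard representation, so that $\sum [\omega_j]^2$ matches the characteristic-class hypotheses exactly as stated. Once the vanishing $[\Phi] = 0$ is in hand, the SKT metric is produced as in the proof of Lemma \ref{grc}.
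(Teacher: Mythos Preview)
Your approach is essentially the same as the paper's: pass to the principal torus bundle $E_K \to X = E_G/B$, identify $[\sum_j \omega_j \wedge \omega_j]$ with the pullback under $p:X\to M$ of the stated characteristic class via the splitting principle, and then run the $dd^c$-lemma argument of Lemma~\ref{grc}. The one gap is your claim that $X$ is projective merely because $M$ and $G/B$ are --- this is false for holomorphic fiber bundles in general (e.g.\ elliptic Hopf surfaces over $\mathbb{P}^1$); the paper fills it by observing that $G/B$ is Fano, so the relative anti-canonical bundle of $p$ tensored with the pullback of a sufficiently ample line bundle on $M$ is ample on $X$. (Your appeal to Leray--Hirsch injectivity is harmless but unnecessary: once $[\Phi] = p^*(\text{class})$, vanishing of the class on $M$ gives $[\Phi]=0$ directly.)
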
 
   
   \begin{proof} Note $X= E_G/B$ is the total space of a holomorphic bundle over $M$ with the flag variety $G/B$ as fiber. The flag variety $G/B$ is Fano. Recall that there is a natural  quotient map $p: X \to M$.   Then the tensor product of the relative anti-canonical bundle over $X$ with the pullback of an  ample bundle over $M$ is ample.  Therefore, $X$ is projective.
   
 First, consider  the case that $K$ is unitary.   Then the pull-back bundle of $E_K \rightarrow M $ under $p$ can be 
 obtained from $E_K \to X$ by extension of structure group from $T$ to $K$.
  Thus the total Chern class of the $K$-bundle $E_K \rightarrow M$ pulls back to the total Chern class of the $T$-bundle $E_K \rightarrow X$  under  $p$. This implies that 
 $$\left[ \sum w_i\wedge w_i \right] = p^{*} (c_1^2 -c_2)(E_K \to M) \,.$$ Then, by similar arguments as in Lemma \ref{grc}, we obtain families of SKT structures
 on $E_K$ if  $c_1^2 = c_2$ for the $K$-bundle $E_K \to M$. 
 
 Similarly, if $K$ is a special orthogonal group or a compact symplectic group, $E_K$ admits SKT structures if the first Pontryagin class of $E_K \to M$ (cf.  Sections 9, 10 of \cite{BH}) is  zero.    \end{proof}
 
The following result is an immediate corollary of Theorem \ref{pbc}.
 
 \begin{corollary}\label{flat}  Let $K$ be an even dimensional unitary, special orthogonal or compact symplectic group. Suppose $E_K \to M$ is a smooth
 flat   principal 
 $K$-bundle over a projective manifold $M$. Then $E_K$ admits an SKT structure. \end{corollary}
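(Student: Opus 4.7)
The plan is to reduce directly to Theorem~\ref{pbc} by verifying both of its hypotheses for a flat principal $K$-bundle $E_K \to M$.

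First, one must check that $E_K \to M$ admits a complexification. As noted in the introduction, every smooth flat principal $K$-bundle over a complex manifold admits a complexification: the flat structure corresponds to a representation $\rho : \pi_1(M) \to K$, and composing $\rho$ with the inclusion $K \hookrightarrow K_{\mathbb{C}}$ produces a flat (hence holomorphic) principal $K_{\mathbb{C}}$-bundle structure on $E_K \times_K K_{\mathbb{C}} \to M$.

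Second, one must verify that the characteristic-class hypothesis of Theorem~\ref{pbc} is automatically satisfied. Since $E_K \to M$ is flat, it admits a connection with vanishing curvature $\Omega$. By Chern--Weil theory, every real (de Rham) characteristic class of $E_K \to M$ is represented by an invariant polynomial in $\Omega$, and therefore vanishes. In particular, $c_1^2(E_K \to M) = c_2(E_K \to M) = 0$ when $K$ is unitary, and $p_1(E_K \to M) = 0$ when $K$ is special orthogonal or compact symplectic. Theorem~\ref{pbc} then furnishes an SKT structure on $E_K$.

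The only mildly delicate point is recognizing that the characteristic classes appearing in Theorem~\ref{pbc} may be interpreted as real de Rham classes, since the proof of that theorem ultimately relies on the $dd^c$-lemma applied to a real $(2,2)$-form on $X$. Once this is observed, Chern--Weil vanishing for flat bundles does all the work, and there is no further obstacle.
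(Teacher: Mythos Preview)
Your proposal is correct and follows exactly the approach intended by the paper, which simply records the corollary as an ``immediate'' consequence of Theorem~\ref{pbc}. You have supplied the two obvious verifications (existence of a complexification for flat bundles, and Chern--Weil vanishing of the relevant characteristic classes) that make this immediacy explicit.
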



 \section{Picard group of $E_K$}\label{picard}
 

 In this section we assume that $K$ is a simply connected compact  classical Lie group of even dimension. We also assume that the base manifold $M$ is a nonsingular projective toric variety. These assumptions are made to put adequate restrictions on cohomology groups. In fact, we can assume $M$ to be any simply-connected projective manifold with $h^{0,2}(M ) = 0$. Our calculations follow the general line of argument given in \cite{BMT}. For the results of this section, we do not require the complex torus $T$ to be a product of
 elliptic curves. 
 
Consider the principal torus bundle $T \hookrightarrow E_K \to X$, where $X = E_{K_{\mathbb{C}}}/B$,
 as in the last section. As it admits a complexification $E_H \to X$, by Theorem \ref{main} it is a holomorphic principal torus bundle.  
Therefore we may apply the Borel spectral sequence  to it. 
We obtain the following result. 

\begin{prop}\label{borelss} 
Let $T \stackrel{i}\hookrightarrow  E_K \stackrel{\phi}\rightarrow X $ be the holomorphic torus bundle described above. Then we have the following short exact sequence,
$$  0 \longrightarrow H^1(X, \mathcal{O}_X) \stackrel{\phi^*}\longrightarrow H^1(E_K, \mathcal{O}_{E_K}) \stackrel{i^*}\longrightarrow 
H^1(T, \mathcal{O}_T) \longrightarrow 0 \, ,$$ 
where $\phi^*$ and $i^*$ are the induced homomorphisms.
\end{prop}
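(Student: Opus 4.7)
The plan is to apply the Leray spectral sequence of the holomorphic principal torus bundle $\phi\colon E_K \to X$ with coefficients in $\mathcal{O}_{E_K}$, namely
$$E_2^{p,q} = H^p(X, R^q\phi_* \mathcal{O}_{E_K}) \;\Longrightarrow\; H^{p+q}(E_K, \mathcal{O}_{E_K}),$$
and to show that the transgression $d_2\colon E_2^{0,1} \to E_2^{2,0}$ on this page vanishes, so that the resulting five-term exact sequence reduces to the desired short exact sequence.

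First I would identify the low direct images. Since the fibers of $\phi$ are the compact connected torus $T$, one has $\phi_*\mathcal{O}_{E_K} = \mathcal{O}_X$. For $R^1\phi_*\mathcal{O}_{E_K}$, the holomorphic $T$-action on $E_K$ identifies each fiber with $T$ canonically up to translation, and because $T$ is connected it acts trivially on $H^1(T,\mathcal{O}_T)$; the local Künneth trivializations therefore glue to a canonical global one,
$$R^1\phi_*\mathcal{O}_{E_K} \;\cong\; H^1(T, \mathcal{O}_T) \otimes_{\mathbb{C}} \mathcal{O}_X.$$
Using $H^0(X, \mathcal{O}_X) = \mathbb{C}$ to collapse $H^0(X, R^1\phi_*\mathcal{O}_{E_K})$ to $H^1(T,\mathcal{O}_T)$, the five-term sequence becomes
$$0 \to H^1(X, \mathcal{O}_X) \xrightarrow{\phi^*} H^1(E_K, \mathcal{O}_{E_K}) \xrightarrow{i^*} H^1(T, \mathcal{O}_T) \xrightarrow{d_2} H^2(X, \mathcal{O}_X),$$
where the first edge map is literally the pullback $\phi^*$, and the second, post-composed with evaluation at any point of $X$, is restriction to the corresponding fiber, which via the trivialization above is $i^*$.

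It then remains to verify $H^2(X, \mathcal{O}_X) = 0$. I would obtain this from the projection $p\colon X = E_G/B \to M$, a holomorphic fiber bundle whose fiber is the Fano flag variety $G/B$. Because $H^q(G/B, \mathcal{O}_{G/B}) = 0$ for every $q \geq 1$, the Leray spectral sequence for $p$ degenerates to $p_*\mathcal{O}_X = \mathcal{O}_M$ and yields $H^2(X, \mathcal{O}_X) \cong H^2(M, \mathcal{O}_M)$, which vanishes under either standing hypothesis on $M$ (smooth projective toric, or simply-connected projective with $h^{0,2}(M) = 0$). The step I expect to require the most care is the canonical trivialization of $R^1\phi_*\mathcal{O}_{E_K}$ together with the identification of the associated edge map with $i^*$; once this is in hand, vanishing of $d_2$ and hence the proposition follow immediately.
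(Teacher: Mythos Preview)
Your argument is correct and follows the same overall strategy as the paper: run a spectral sequence for $\phi\colon E_K\to X$, reduce to showing that the relevant $d_2$ vanishes, and accomplish this by proving $H^{0,2}(X)=0$ via the fibration $X\to M$ together with the vanishing of higher $(0,q)$-cohomology of the flag fiber $G/B$.

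The technical packaging differs. The paper invokes the Borel spectral sequence (Hirzebruch, Appendix~2), whose $E_2$-page is already given as a tensor product $H^{*,*}(X)\otimes H^{*,*}(T)$; this bypasses any discussion of higher direct images. You instead use the ordinary Leray spectral sequence and supply the triviality of $R^1\phi_*\mathcal{O}_{E_K}$ by hand, exploiting that $\phi$ is a holomorphic \emph{principal} torus bundle so that translation acts trivially on fiber cohomology. Likewise, for $H^{0,2}(X)=0$ the paper quotes the Borel inequality and Pittie's computation of $h^{0,j}(K/T)$, whereas you run Leray for $p\colon X\to M$ directly and appeal to $H^q(G/B,\mathcal{O}_{G/B})=0$ for $q\ge 1$ (a standard consequence of $G/B$ being rational or Fano). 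Your route is slightly more elementary in that it avoids the K\"ahler-fiber hypothesis built into the Borel machinery, at the cost of the extra trivialization step you correctly flagged as the point requiring care.
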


\begin{proof} Consider the Borel spectral sequence (cf. Appendix 2 of \cite{Hir} ) associated to $T \stackrel{i}\hookrightarrow  E_K \stackrel{\phi}\rightarrow X $, for the trivial line bundle $\mathcal{O}_X$ over $X$. Since the structure group $T$ is connected,
we have 
$$ ^{p,q}E_2^{s,t} = \sum_i H^{i,s-i}(X, \mathcal{O}_X) \otimes H^{p-i, q-s+i} (T) \,,$$
where $p+q = s+t$.
Then, we have 
\begin{equation}\label{bss1}
\begin{array}{llll}
^{0,1}E_2^{1,0} &  \stackrel{d_2}\longrightarrow &   ^{0,2}E_2^{3,-1}  & = 0  \\
\\
^{0,1}E_2^{0,1} &  \stackrel{d_2}\longrightarrow &   ^{0,2}E_2^{2,0}  & =  H^{0,2} (X, \mathcal{O}_X) \,. \end{array}
\end{equation}
Recall that $X$ is the total space of a holomorphic fiber bundle over a projective toric manifold $M$ with a flag variety 
$K_{\mathbb{C}}/B = K/T$ as fiber.  
Applying the Borel spectral sequence, more precisely the inequality 7.5 in Appendix 2 of \cite{Hir},  to the bundle $ K/T \hookrightarrow X \rightarrow M $, we obtain
\begin{equation}\label{bss2} h^{0,2}(X) \le \sum_{i+j=2} h^{0,i}(M) \,h^{0,j}(K/T) \,. \end{equation}
It follows from Proposition 3.8 of \cite{pittie} that $h^{0,j} (K/T) = 0$ unless $j=0$. A similar result is well-known for projective toric manifolds. Thus from \eqref{bss2} we have, $h^{0,2}(X) = 0$. This readily implies that $H^{0,2} (X, \mathcal{O}_X) = 0$.

Since no element of $ ^{0,1}E_r^{1,0} $ and $ ^{0,1}E_r^{0,1} $ is $d_r$-boundary for $r\ge 2$, we have 
$$^{0,1}E_2^{1,0} \, = \,  ^{0,1}E_{\infty}^{1,0} \quad {\rm and } \quad   ^{0,1}E_2^{0,1} \, = \,  ^{0,1}E_{\infty}^{0,1} \,.$$
We have a filtration controlled by degree of  base component, 
$$ H^1(E_K, \mathcal{O}_{E_K}) = D^0 \supset D^1 \supset 0 \,, $$
where $D^1 = ^{0,1}E_{\infty}^{1,0} $ and $D^0/D^1= ^{0,1}E_{\infty}^{0,1} $. The associated graded object is 
$$ {\rm Gr} H^1(E_K, \mathcal{O}_{E_K}) =   \, ^{0,1}E_{\infty}^{1,0} \, \oplus \, ^{0,1}E_{\infty}^{0,1} \,.$$
Thus, the natural homomorphism 

$$ \phi^* : H^1(X, \mathcal{O}_X) = \, ^{0,1}E_2^{1,0} \longrightarrow \, ^{0,1}E_{\infty}^{1,0} = D^1 \subset H^1(E_K, \mathcal{O}_{E_K}) $$
is injective, and the natural homomorphism 

$$ i^* :   H^1(E_K, \mathcal{O}_{E_K}) = D^0 \longrightarrow D^0/D^1 \,= \,  ^{0,1}E_{\infty}^{0,1} \, = \, ^{0,1}E_{2}^{0,1}= H^1(T, \mathcal{O}_T)$$
is surjective. Therefore, we have the required exact sequence 
$$  0 \longrightarrow H^1(X, \mathcal{O}_X) \stackrel{\phi^*}\longrightarrow H^1(E_K, \mathcal{O}_{E_K}) \stackrel{i^*}\longrightarrow 
H^1(T, \mathcal{O}_T) \longrightarrow 0 \, .$$ \end{proof}

\begin{lemma} \label{sss0} Let $p: X \to M$ be the projection map. Then the homomorphism 
$$p^* : H^1(M, \mathcal{O}_{M})   \to H^1(X, \mathcal{O}_{X})  $$ is an isomorphism.
\end{lemma}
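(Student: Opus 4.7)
The plan is to apply the Borel spectral sequence to the holomorphic fiber bundle $K/T \hookrightarrow X \to M$, exactly in the style of the proof of Proposition \ref{borelss}, but now with base $M$ and fiber $K/T$. The essential input I need is Pittie's vanishing (Proposition 3.8 of \cite{pittie}): the flag variety $K/T$ has its Hodge numbers concentrated on the diagonal, so in particular $h^{0,j}(K/T) = 0$ for every $j > 0$. This is the same fact already invoked in the proof of Proposition \ref{borelss}.

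With the indexing conventions of Proposition \ref{borelss}, the relevant page of the Borel spectral sequence reads
\begin{equation*}
{}^{0,1}E_2^{s,t} \,=\, \sum_i H^{i,s-i}(M, \mathcal{O}_M) \otimes H^{-i,\,1-s+i}(K/T), \qquad s+t=1.
\end{equation*}
For $(s,t)=(0,1)$ only $i=0$ gives a summand with nonnegative bidegrees, and it equals $H^{0,0}(M)\otimes H^{0,1}(K/T)$, which vanishes by Pittie. For $(s,t)=(1,0)$ again only $i=0$ contributes, and it yields $H^{0,1}(M)\otimes H^{0,0}(K/T) \cong H^1(M,\mathcal{O}_M)$. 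Hence the only nonzero term in total degree $1$ on the $E_2$-page is ${}^{0,1}E_2^{1,0} = H^1(M,\mathcal{O}_M)$.

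Next I would verify that no $d_r$-differential with $r\geq 2$ enters or leaves this position. Outgoing differentials from $(s,t)=(1,0)$ land in $(1+r,\,1-r)$ with $1-r<0$ for $r\geq 2$, hence in zero groups; incoming differentials would have to originate in total degree $0$, which is impossible for $d_r$ with $r\geq 2$. Therefore ${}^{0,1}E_\infty^{1,0} = H^1(M,\mathcal{O}_M)$, and since ${}^{0,1}E_\infty^{0,1}=0$ the filtration on $H^1(X,\mathcal{O}_X)$ collapses to this single graded piece. Naturality of the Borel spectral sequence identifies the resulting isomorphism with $p^{*}$.

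The main obstacle I anticipate is careful bookkeeping of the spectral sequence indexing and the verification that the relevant differentials vanish, but this is of the same flavor as, and slightly easier than, the argument already carried out in Proposition \ref{borelss}. If one prefers to avoid the Borel spectral sequence altogether, an alternative cleaner route is the ordinary Leray spectral sequence for $p$ with coefficients in $\mathcal{O}_X$: Pittie's vanishing together with base change gives $R^q p_*\mathcal{O}_X=0$ for $q>0$ and $R^0 p_*\mathcal{O}_X=\mathcal{O}_M$, whence the Leray spectral sequence degenerates at $E_2$ and gives $H^s(X,\mathcal{O}_X)\cong H^s(M,\mathcal{O}_M)$ for all $s$, with the isomorphism induced by $p^{*}$.
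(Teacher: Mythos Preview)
Your argument via the Borel spectral sequence (and the alternative Leray route) is correct and yields the result; the only quibble is the justification for vanishing of the incoming differentials at $(s,t)=(1,0)$: the clean reason is that the source $E_r^{1-r,\,r-1}$ has negative $s$-index for $r\ge 2$, not merely that it lies in total degree~$0$.

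The paper, however, takes a quite different and more elementary route. It observes that the fiber $K_{\mathbb C}/B\cong K/T$ is simply connected, so the long exact homotopy sequence of $p$ gives $\pi_1(X)\cong\pi_1(M)$, hence $p^*:H^1(M,\mathbb Q)\to H^1(X,\mathbb Q)$ is an isomorphism. Since both $M$ and $X$ are K\"ahler (the latter was established in the proof of Theorem~\ref{pbc}), the Hodge decomposition then transports this to $H^{0,1}$, i.e.\ to $H^1(\,\cdot\,,\mathcal O)$. Your approach avoids the appeal to K\"ahlerness of $X$ at the cost of invoking Pittie's vanishing $h^{0,j}(K/T)=0$ for $j>0$ and the Borel machinery; it also gives more, since the Leray variant shows $H^s(X,\mathcal O_X)\cong H^s(M,\mathcal O_M)$ for all $s$. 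The paper's argument is shorter and uses only soft topology plus Hodge theory, but is specific to degree~$1$.
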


\begin{proof} The  map $p$ corresponds to a fiber bundle with the flag variety $K_{\mathbb{C}}/B$  as fiber. The flag variety  is simply connected. Therefore the long exact sequence for homotopy groups of $p$ implies that the homomorphism $\pi_1(X) \to \pi_1(M) $ induced by $p$ is an isomorphism. Hence $$p^*: H^1(M, \mathbb{Q}) \to H^1(X, \mathbb{Q})$$ is an isomorphism. The lemma follows since $X$ and $M$ are both
 K\"ahler.
\end{proof}

\begin{lemma}\label{sss1}
Let $K$ be a simply connected compact  classical Lie group of even dimension. 
Let $q: E_K \to M$ be the projection map. Then the  pullback homomorphism
$$ q^{\ast} : H^i(M\,, \, \mathbb{Z}) \rightarrow H^i(E_K, \mathbb{Z})\,  $$ is an isomorphism for $i=1,2$.
\end{lemma}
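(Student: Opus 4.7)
The natural tool is the Leray--Serre spectral sequence of the fiber bundle $K \hookrightarrow E_K \stackrel{q}\to M$, combined with low-dimensional topological vanishing for a simply connected compact Lie group.

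My plan is first to record the cohomology of the fiber in low degrees. Since $K$ is simply connected, $H^1(K,\mathbb{Z}) = 0$. Moreover, it is a classical fact (due to \'E. Cartan) that $\pi_2(K) = 0$ for any Lie group; combined with $\pi_1(K) = 0$, the Hurewicz theorem gives $H_2(K,\mathbb{Z}) = 0$, and then universal coefficients yields $H^2(K,\mathbb{Z}) = 0$. These two vanishings are the only input about $K$ I will use.

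Next I would apply the Leray--Serre spectral sequence
\begin{equation*}
E_2^{p,q} = H^p(M,\mathscr{H}^q(K,\mathbb{Z})) \;\Longrightarrow\; H^{p+q}(E_K,\mathbb{Z}),
\end{equation*}
where $\mathscr{H}^q(K,\mathbb{Z})$ is the local system on $M$ determined by the monodromy action on $H^q(K,\mathbb{Z})$. By the previous paragraph, $\mathscr{H}^1(K,\mathbb{Z}) = 0 = \mathscr{H}^2(K,\mathbb{Z})$, so $E_2^{p,1} = 0 = E_2^{p,2}$ for all $p$. In particular, in the total degrees $1$ and $2$ only the bottom row $E_2^{p,0} = H^p(M,\mathbb{Z})$ survives.

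Finally I would check that no differentials can alter $E_2^{1,0}$ or $E_2^{2,0}$. The only potentially nonzero differential out of $E_r^{1,0}$ lands in $E_r^{r+1,1-r}$, which is $0$ for $r \geq 2$ (either because of negative $q$ or because $\mathscr{H}^1 = 0$). The only potentially nonzero differentials into $E_r^{2,0}$ come from $E_r^{2-r,r-1}$, which vanish for $r=2,3$ since $\mathscr{H}^1 = \mathscr{H}^2 = 0$ (and for $r\geq 4$ the source has negative $p$). Hence $E_\infty^{1,0} = H^1(M,\mathbb{Z})$ and $E_\infty^{2,0} = H^2(M,\mathbb{Z})$, and since $E_\infty^{0,1} = E_\infty^{1,1} = E_\infty^{0,2} = 0$, the edge homomorphisms identify $H^i(E_K,\mathbb{Z})$ with $H^i(M,\mathbb{Z})$ for $i=1,2$, and this identification is precisely $q^{\ast}$. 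No real obstacle is expected: the whole argument hinges on the fiber vanishing $H^{\leq 2}(K,\mathbb{Z}) = 0$, which is automatic for any simply connected compact Lie group and requires no projectivity or toric hypothesis on $M$.
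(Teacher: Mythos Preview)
Your argument is correct and follows the same route as the paper: both use the Leray--Serre spectral sequence of $K \hookrightarrow E_K \to M$ together with $H^1(K,\mathbb{Z})=H^2(K,\mathbb{Z})=0$ to see that only the bottom row contributes in total degrees $1$ and $2$. The one substantive difference is that the paper invokes the standing hypothesis that $M$ is simply connected (so the coefficient systems are automatically constant and $H^1(M,\mathbb{Z})=0$), whereas you bypass this by observing that the local systems $\mathscr{H}^1$ and $\mathscr{H}^2$ themselves vanish; your version therefore works for any base $M$, which is a small but genuine gain.
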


\begin{proof} We will verify the $i=2$ case. The other case is simpler.

Observe that $M$ is simply connected. Consider the Serre spectral sequence associated to the fiber bundle 
$$ K \hookrightarrow E_K \to M \,.  $$ Note that $H^{2}(K, \mathbb{Z})= 0$ and $H^{1}(M, \mathbb{Z})= 0$. Consequently, we have 

$$\begin{array}{l}  E_2^{0,2} = H^{0}(M, \mathbb{Z}) \otimes H^{2}(K, \mathbb{Z}) =0 \, , \\  
  \\
E_2^{1,1} = H^{1}(M, \mathbb{Z}) \otimes H^{1}(K, \mathbb{Z}) = 0 \, , \\ 
 \\
E_2^{2,0} = H^{2}(M, \mathbb{Z}) \otimes H^{0}(K, \mathbb{Z}) =   H^{2}(M, \mathbb{Z})  \, . \end{array} $$
\smallskip

\noindent Moreover, since $H^1(K, \mathbb{Z})=0$, $E_2^{0,1} = H^{0}(M, \mathbb{Z}) \otimes H^{1}(K, \mathbb{Z})   = 0$. Therefore,
 $$ d_2 : E_2^{0,1}  \rightarrow E_2^{2,0} $$ is the zero map. This and the above imply that
 
 $$ E_{\infty}^{0,2} = 0 \, , \, E_{\infty}^{1,1} = 0 \, , \, \rm{and} \,   E_{\infty}^{2,0} = H^2(M, \mathbb{Z}) \,  .$$ It follows that 
 
 $$q^{*}: H^2(M, \mathbb{Z}) = E_{\infty}^{2,0} \rightarrow  H^2(E_K, \mathbb{Z}) $$ is an isomorphism.
\end{proof}

\begin{theorem}\label{pic2} Let $K$ be a special unitary or compact symplectic Lie group of even dimension. Then
$$  {\rm Pic}(E_K) \cong {\rm Pic} (M) \bigoplus H^{1}(T,O_T)$$ 

\end{theorem}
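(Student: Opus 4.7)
The plan is to apply the exponential sheaf sequence to $E_K$ and identify each term via the cohomological results already prepared in this section. Starting from $0 \to \mathbb{Z} \to \mathcal{O}_{E_K} \to \mathcal{O}_{E_K}^{*} \to 0$, I extract the long exact sequence
$$H^1(E_K,\mathbb{Z}) \to H^1(E_K,\mathcal{O}_{E_K}) \to \mathrm{Pic}(E_K) \xrightarrow{c_1} H^2(E_K,\mathbb{Z}) \to H^2(E_K,\mathcal{O}_{E_K}).$$
By Lemma \ref{sss1} and the simple connectivity of $M$, the leftmost term vanishes and $q^{*} : H^2(M,\mathbb{Z}) \to H^2(E_K,\mathbb{Z})$ is an isomorphism. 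Proposition \ref{borelss} combined with Lemma \ref{sss0} yields a short exact sequence of complex vector spaces
$$0 \to H^1(M,\mathcal{O}_M) \to H^1(E_K,\mathcal{O}_{E_K}) \to H^1(T,\mathcal{O}_T) \to 0,$$
which automatically splits. Hodge theory on the simply connected Kähler manifold $M$ forces $H^1(M,\mathcal{O}_M) = H^{0,1}(M) = 0$, so $H^1(E_K,\mathcal{O}_{E_K}) \cong H^1(T,\mathcal{O}_T)$.

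Next, because $M$ is simply connected and projective with $h^{0,2}(M)=0$, its own exponential sequence collapses to an isomorphism $c_1 : \mathrm{Pic}(M) \xrightarrow{\sim} H^2(M,\mathbb{Z})$. Naturality of $c_1$ under pullback then makes the composition
$$\mathrm{Pic}(M) \xrightarrow{q^{*}} \mathrm{Pic}(E_K) \xrightarrow{c_1} H^2(E_K,\mathbb{Z})$$
an isomorphism. In particular $c_1$ on $\mathrm{Pic}(E_K)$ is surjective, so the long exact sequence above reduces to
$$0 \to H^1(T,\mathcal{O}_T) \to \mathrm{Pic}(E_K) \to H^2(E_K,\mathbb{Z}) \to 0,$$
which is split by $q^{*} : \mathrm{Pic}(M) \to \mathrm{Pic}(E_K)$. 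The desired isomorphism $\mathrm{Pic}(E_K) \cong \mathrm{Pic}(M) \oplus H^1(T,\mathcal{O}_T)$ follows.

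The substantive content is already packaged into Proposition \ref{borelss} and Lemma \ref{sss1}; the simple-connectedness of $K$ (valid for $SU(n)$ and $Sp(n)$ but failing for $SO(n)$, which is precisely why those are the groups mentioned in the theorem) is what lets those inputs apply. I do not anticipate a real obstacle, only the mild point that the splitting must be exhibited by the canonical map $q^{*}$ rather than by an abstract section, so that the first summand is identified naturally with $\mathrm{Pic}(M)$; this is automatic from the naturality of $c_1$.
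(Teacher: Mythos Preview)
Your proposal is correct and follows essentially the same route as the paper: both arguments use the exponential sequence for $E_K$, invoke Lemma~\ref{sss1} to identify $H^i(E_K,\mathbb{Z})$ with $H^i(M,\mathbb{Z})$ for $i=1,2$, use Proposition~\ref{borelss} together with Lemma~\ref{sss0} and $h^{0,1}(M)=0$ to get $H^1(E_K,\mathcal{O}_{E_K})\cong H^1(T,\mathcal{O}_T)$, and then observe that the composition $\mathrm{Pic}(M)\xrightarrow{q^*}\mathrm{Pic}(E_K)\xrightarrow{c_1}H^2(E_K,\mathbb{Z})$ is an isomorphism, yielding the split short exact sequence. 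The only cosmetic difference is that the paper packages the naturality argument in a commutative ladder between the exponential sequences of $M$ and $E_K$, whereas you invoke naturality of $c_1$ directly; the content is identical.
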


\begin{proof} We have the following commutative diagram whose horizontal rows are exact and whose vertical arrows are induced by the map $q: E_K \to M $.

\scriptsize
\begin{equation}\label{cd}
\begin{CD}
   H^1(M,\mathbb{Z}) @> >> H^1(M,\mathcal{O}_M) @>>> {\rm Pic}(M) @>>> H^2(M,\mathbb{Z}) @>>> H^2(M, \mathcal{O}_M) \\
@V  q^*  VV  @V  q^*  VV  @V q^*VV  @V q^*VV  @V q^*VV  \\
 H^1(E_K,\mathbb{Z}) @> >> H^1(E_K,\mathcal{O}_{E_K}) @>>> {\rm Pic}(E_K) @>>> H^2 (E_K,\mathbb{Z}) @>>> H^2(E_K, \mathcal{O}_{E_K}) \end{CD}
\end{equation}
\normalsize

By Lemma \ref{sss1}, the first and fourth vertical arrows are isomorphisms. By Proposition \ref{borelss}  and Lemma \ref{sss0}, the second vertical arrow  is injective. Moreover as $ H^1(M,\mathbb{Z}) = 0 = H^1(E_K,\mathbb{Z})$, the second horizontal arrow in each row is an injection. Then by 
diagram chasing, it follows that $q^* : {\rm Pic}(M) \longrightarrow  {\rm Pic}(E_K)$ is injective.

Since $H^1(M,\mathcal{O}_M) = 0 = H^2(M,\mathcal{O}_M) $, the map ${\rm Pic}(M) \longrightarrow H^2(M,\mathbb{Z}) $ is an isomorphism. Note that the composition of this map with $q^*:  H^2(M,\mathbb{Z}) \longrightarrow H^2(E_K,\mathbb{Z})$ is also an isomorphism. Therefore, by commutativity of the diagram, the map $ {\rm Pic}(E_K) \longrightarrow H^2(E_K,\mathbb{Z})$ is surjective. Using the isomorphism between 
${\rm Pic}(M)$ and $H^2(E_K,\mathbb{Z})$, we have a split exact sequence 
\begin{equation}\label{sespic}
 0 \longrightarrow  H^1(E_K,\mathcal{O}_{E_K})  \longrightarrow {\rm Pic}(E_K) \longrightarrow {\rm Pic}(M) \longrightarrow 0 \,. 
 \end{equation}
By Proposition \ref{borelss} $ H^1(E_K,\mathcal{O}_{E_K}) \cong  H^{1}(T,O_T)$, since  by Lemma \ref{sss0}
 $H^1(X,\mathcal{O}_X) \cong H^1(M, \mathcal{O}_M)  $, and $H^{1}(M ,O_M)=0$.  This concludes the proof.
 \end{proof}


\section{Meromorphic functions on $E_K$}\label{mero}


Recall the holomorphic torus bundle $T \stackrel{i}\hookrightarrow  E_K \stackrel{\phi}\rightarrow X $ from the last section. If the complex structure on $T$ is generic, then $T$ admits no non-constant meromorphic functions. Hence meromorphic functions on $E_K$ are pull-backs of 
meromorphic functions on $X$.

On the other hand suppose the complex structure on $E_K$ is in accordance with  Section \ref{shs}, so that $T$ has the complex structure of a product of elliptic curves.  In this case $T$ has the highest possible transcendence degree. However, we find that $E_K$ and $X$ may still have isomorphic fields of meromorphic functions. 

\begin{theorem}\label{adim} Asume that the complex structure on $E_K$ is such that $T$ is isomorphic to a product of elliptic curves.
Assume that $M$ is a simply-connected projective manifold with $h^{0,2}(M ) = 0$. Furthermore, assume that $K$ is  a special unitary or compact symplectic Lie group of even dimension. Then the algebraic dimension of $E_K$ equals the complex dimension of $X$.
\end{theorem}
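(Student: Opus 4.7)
The plan is to prove that the pullback map $\phi^{\ast} : \mathcal{M}(X) \to \mathcal{M}(E_K)$ is an isomorphism of fields, where $\phi: E_K \to X$ is the holomorphic principal $T$-bundle of Section \ref{shs}. Since $X$ is projective, as established in the proof of Theorem \ref{pbc}, its algebraic dimension equals $\dim_{\mathbb{C}} X$, so this yields $a(E_K) = \dim_{\mathbb{C}} X$. The map $\phi^{\ast}$ is evidently injective since $\phi$ is surjective, so all the content lies in showing that every $f \in \mathcal{M}(E_K)$ descends, i.e., is constant along the generic fiber of $\phi$.

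Fix $f \in \mathcal{M}(E_K)$, let $D = (f)_{\infty}$ be its polar divisor, and set $L := \mathcal{O}_{E_K}(D)$. The tautological section $s_0 \in H^{0}(E_K,L)$ with $\mathrm{div}(s_0) = D$, together with $f \cdot s_0$, furnish two global holomorphic sections of $L$. I would first argue that the restriction $L|_{T_x}$ to every fiber of $\phi$ is topologically trivial: by Lemma \ref{sss1}, $c_1(L)$ lies in the image of $q^{\ast} : H^{2}(M,\mathbb{Z}) \to H^{2}(E_K,\mathbb{Z})$, where $q = p \circ \phi$; since $q|_{T_x}$ is constant, $c_1(L|_{T_x}) = 0$, so $L|_{T_x} \in \mathrm{Pic}^{0}(T_x)$. (Alternatively one may use the decomposition from Theorem \ref{pic2} together with the description of $H^{1}(T,\mathcal{O}_T) \cong \mathrm{Pic}^{0}(T)$.)

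Next, I would invoke the classical fact that on a complex torus (a fortiori on a product of elliptic curves), a line bundle in $\mathrm{Pic}^{0}$ admits a nonzero holomorphic global section only if it is the trivial bundle. For any $x \in X$ with $T_x \not\subseteq D$, the restriction $s_0|_{T_x}$ is a nonzero section of $L|_{T_x}$, so $L|_{T_x} \cong \mathcal{O}_{T_x}$. Then $s_0|_{T_x}$ corresponds under some trivialization to a holomorphic function on the compact torus $T_x$, hence to a nonzero constant, so $D$ cannot meet $T_x$ in any proper nonempty subset. Consequently $D$ is saturated under $\phi$, of the form $D = \phi^{-1}(D')$ for a divisor $D'$ on $X$; moreover both $s_0|_{T_x}$ and $(f s_0)|_{T_x}$ are nonzero constants for every $x \notin D'$. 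This makes $f$ fibrewise constant on $E_K \setminus \phi^{-1}(D')$, and hence $f = \phi^{\ast} g$ for some meromorphic function $g$ on $X$, as desired.

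The step I expect to be the main obstacle is the triviality-from-nonzero-section statement in the second paragraph above. It is a standard fact (proven, e.g., by equipping a $\mathrm{Pic}^{0}$ line bundle with a flat Hermitian metric, under which any holomorphic section is harmonic, hence of constant norm on the compact torus, and therefore either identically zero or nowhere vanishing), but it must be quoted with care since $T_x$ is not assumed algebraic in general. Under the present hypothesis that $T_x$ is a product of elliptic curves, one may simply appeal to the corresponding fact for abelian varieties. The remaining steps are bookkeeping using the cohomology calculations of Section \ref{picard}, and the descent of a fibrewise-constant meromorphic function on the holomorphic bundle $E_K \to X$ with compact connected fiber.
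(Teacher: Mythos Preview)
Your proposal is correct and follows essentially the same strategy as the paper: express $f$ as a ratio of sections of a line bundle, show the restriction of this bundle to each torus fiber has vanishing first Chern class, deduce that $f$ is constant along fibers, and then descend to $X$.

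There are two minor variations worth noting. First, to obtain $c_1(L|_{T_x})=0$ you invoke Lemma~\ref{sss1} directly (pulling $c_1(L)$ back from $H^2(M,\mathbb{Z})$ and using that $q|_{T_x}$ is constant), whereas the paper routes this through the split exact sequence for $\mathrm{Pic}(E_K)$ in Theorem~\ref{pic2} and the commuting square with the exponential maps. Your route is slightly more direct. Second, to pass from $L|_{T_x}\in\mathrm{Pic}^0$ with a nonzero section to fibrewise constancy of $f$, you quote the general torus fact (flat Hermitian metric, harmonic sections), while the paper restricts further to each elliptic curve factor $C_j$ and uses Riemann--Roch to get $h^0\le 1$ on a degree-zero bundle over a genus-one curve, concluding that any two sections are proportional on each $C_j$ and hence $f|_T$ is constant. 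The paper's argument is more elementary but uses the product-of-elliptic-curves hypothesis essentially; yours would work for any complex torus fiber. Both are valid.
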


\begin{proof}
Recall from Proposition \ref{borelss}  that the restriction map induces a homomorphism $$i^* : H^{1}(E_K, \mathcal{O}_{E_K}) 
\longrightarrow H^1(T, \mathcal{O}_T )\, .$$ This is in fact an isomorphism as $H^1(X, \mathcal{O}_X)= 0$. 
 We have a commuting diagram 
\begin{equation}\label{cd2}
\begin{CD}
H^1(E_K,\mathcal{O}_{E_K}) @>  \exp >> {\rm Pic}(E_K) \\
 @V  i^*  V \cong V  @V  i^*  VV \\
 H^1(T,\mathcal{O}_{T}) @> \exp >> {\rm Pic}(T) 
\end{CD}
\end{equation}
where the horizontal arrows are induced by exponentiation. The top horizontal map is injective by \eqref{sespic}. 

Every nontrivial meromorphic function $f$ on $E_K$  can be expressed 
as the ratio of two holomorphic sections of a suitable line bundle $\mathcal{L} $ over $E_K$. 
The restriction of $f$ to $T$ may be expressed as the ratio of two sections of  $i^*(\mathcal{L})$. However, using
\eqref{cd2} and the split exact sequence \eqref{sespic},
we observe that  $i^*(\mathcal{L})$ is  in the image of $\exp: H^1(T,\mathcal{O}_{T}) \to {\rm Pic}(T) $. Therefore,
the first Chern class  of  $i^*(\mathcal{L})$ is zero. Consider the torus $T$ as a Cartesian product of elliptic curves 
$T = \prod_{j=1}^{r} C_j$. Let $ \alpha_j : C_j \to T  $ be any inclusion map compatible with this product decomposition.
 It follows that the bundle $(\alpha_j)^* i^* {\mathcal{L}}$ on $C_j$
 has degree zero. Therefore, using Riemann-Roch, the linear system of $(\alpha_j)^* i^* {\mathcal{L}}$ has dimension at most one.
  However, this  bundle has a nontrivial holomorphic section by construction. Therefore, the dimension of the linear system is one.
 Hence, the ratio of any two sections of this bundle is 
 constant. Therefore, the restriction of $f$ to any image $\alpha_j(C_j)$ is constant. This implies that the restriction of $f$  to $T$ is constant. We conclude that 
 a meromorphic function on $E_K$ is constant on every fiber of $\phi: E_K \to X$. Therefore, the field of meromorphic functions on $E_K$ consists 
 only of pullbacks of meromorphic functions on $X$. Consequently, the algebraic dimension of $E_K$ equals that of $X$. However, $X$ is  a complex algebraic variety. So its algebraic dimension equals its complex dimension. This completes the proof.
\end{proof}

\begin{remark} A similar argument as above shows that the meromorphic functions on the generalized Calabi-Eckman manifolds
 $S= S_{\lambda}^{\xi_1, \xi_2}$ studied in \cite{BMT} are constant along the torus fibers of the fibration  $T \hookrightarrow S \rightarrow Y$.  
\end{remark}


	\section{Proper action of a complex linear algebraic group}\label{proper}
	
  
  Consider any proper holomorphic right action of  a complex Lie group $N$ on a complex manifold $X$. A stabilizer $N_x, x \in X$, of this action is a compact Lie subgroup of $N$. 
 Suppose $L$ is a torsion-free closed complex Lie subgroup of $N$. Then $N_x \bigcap L$ is a compact torsion-free Lie group, and hence trivial.  Therefore, the induced action of $L$ on $X$ is proper and free. Then the quotient $X/L$ has the structure of a  complex manifold such that the projection $X \rightarrow X/L$ is a holomorphic map.  If the $N$-action on $X$ is also free, we get a holomorphic fiber bundle $X/L \rightarrow X/N$ with fiber $N/L$. Further, if $N$ is abelian then the holomorphic fiber bundle $X/L \rightarrow X/N$ has the structure of a holomorphic principal $(N/L)$-bundle.

  \begin{lemma}\label{exl} Let $N$ be a complex linear algebraic group with $\text{rank}(N) > 1$ and let $K$ be a maximal compact subgroup of $N$.  Then there exists a nontrivial torsion-free closed complex Lie subgroup $L$ of $N$.
 Further, if $N$ is of even rank, then we can choose $L$ to be transverse to $K$. 
  \end{lemma}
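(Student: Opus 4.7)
\emph{Proof plan.} The strategy is to use the Levi decomposition of $N$ and construct $L$ either inside a Cartan subgroup of the reductive part, or as a Samelson-type subgroup extended by the unipotent radical. I would write $N = G \cdot U$, where $G$ is a reductive Levi factor chosen so that $K \subset G$ (so $G \cong K_{\mathbb{C}}$) and $U$ is the unipotent radical of $N$. Fix a maximal torus $T \subset K$ of rank $r = \mathrm{rank}(N)$, with Lie algebra $\mathfrak{t}$, and let $H = T_{\mathbb{C}} \subset G$ denote the associated Cartan subgroup with Lie algebra $\mathfrak{h} = \mathfrak{t} \otimes \mathbb{C}$.

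For the first assertion I would exhibit a nontrivial complex line $\mathfrak{a} \subset \mathfrak{h}$ with $\mathfrak{a} \cap \mathfrak{t} = 0$. Since $r > 1$, pick $\mathbb{R}$-linearly independent vectors $v_1, v_2 \in \mathfrak{t}$ and set $\mathfrak{a} := \mathbb{C}(v_1 + i v_2)$: separating real and imaginary parts of $\lambda (v_1 + i v_2) \in \mathfrak{t}$ forces $\lambda = 0$, giving $\mathfrak{a} \cap \mathfrak{t} = 0$. The arguments already recorded in Section \ref{cs} then show that $\exp(\mathfrak{a})$ is a nontrivial, closed, torsion-free complex Lie subgroup of $H \subset N$, which we take to be $L$.

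For the second assertion, suppose $r$ is even. Since the roots of the compact Lie algebra $\mathfrak{k}$ come in pairs and contribute an even dimension to $\mathfrak{k}/\mathfrak{t}$, we have $\dim K \equiv r \pmod 2$, so $K$ is even-dimensional. Hence Proposition \ref{semisimple} applies to $K$ and furnishes a Samelson complex Lie subgroup $L_0 = \exp(\mathfrak{a}_0) \cdot U_B$ of $G$ that is torsion-free, closed, and transverse to $K$ in the sense that $K \cap L_0 = 1$ and $K L_0 = G$. I would then set $L := L_0 \cdot U$. Closedness follows since $L$ is the preimage of the closed subgroup $L_0$ under the quotient $N \to N/U = G$. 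For torsion-freeness, the same projection sends a torsion element of $L$ to a torsion element of $L_0$, hence to $1$, placing the element in $U$, which is itself torsion-free. Transversality reduces, again via the projection modulo $U$, to $K \cap L_0 = 1$ together with $U \cap K = 1$ (a compact subgroup of a torsion-free group), and $KL = KL_0 U = GU = N$. The main subtlety to watch for is the interaction between $L_0$ and $U$ in the semidirect product, but these verifications are all handled uniformly by the projection argument above.
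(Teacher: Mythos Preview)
Your argument for the second assertion (even rank) is essentially the same as the paper's: use the Levi decomposition $N = G \cdot U$, apply Proposition~\ref{semisimple} inside $G$, and pull back along $N \to N/U$. Your extra verifications of closedness, torsion-freeness, and transversality via the projection are exactly what is needed and match the paper's (terser) treatment.

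For the first assertion you take a genuinely different route. The paper handles the case where $K$ is odd-dimensional by passing to $N \times \mathbb{C}^*$, applying Proposition~\ref{semisimple} to the even-dimensional $K \times S^1$, and then intersecting the resulting subgroup $L'$ back with $N$; nontriviality comes from a dimension count. You instead construct a single complex line $\mathfrak{a} = \mathbb{C}(v_1 + i v_2)$ directly inside the Cartan subalgebra and take $L = \exp(\mathfrak{a})$. This is more explicit and avoids the auxiliary product with $\mathbb{C}^*$. One caveat: your appeal to ``the arguments already recorded in Section~\ref{cs}'' is slightly loose, since that section treats $\mathfrak{a}$ of complex dimension exactly half the rank (so that $A_\Lambda$ is square), not a one-dimensional $\mathfrak{a}$ in a torus of arbitrary rank. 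The fix is easy: either restrict to the rank-two subtorus of $T$ spanned by $v_1, v_2$, where Section~\ref{cs} applies verbatim, or give the direct argument that the composite $\mathfrak{a} \hookrightarrow \mathfrak{h} \to \mathfrak{h}/\mathfrak{t}$ is injective (since $\mathfrak{a}\cap\mathfrak{t}=0$), hence $\exp(\mathfrak{a})$ projects injectively and properly to the $\mathbb{R}^r$ factor of the polar decomposition $H \cong T \times \mathbb{R}^r$, giving closedness and torsion-freeness at once.
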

  
  \begin{proof} First assume that $N$ is a reductive complex linear algebraic group. Then the lemma follows from Proposition \ref{semisimple} if the maximal compact subgroup $K$  of $N$ is even dimensional. Otherwise, note that
  $K \times S^1$ is an even-dimensional  maximal compact Lie subgroup of the reductive group
  $N \times \mathbb{C}^*$. Then by  
   Proposition \ref{semisimple},  $N \times \mathbb{C}^*$  has a torsion-free closed complex Lie subgroup $L'$ such that 
   $N \times \mathbb{C}^* = (K \times S^1) L'$. This implies $N \bigcap L'$ is a torsion-free closed complex Lie subgroup of $N$.  
  Moreover, by dimension counting, $L:= N \bigcap L' $ is nontrivial.
  
   Now assume that $N$ is a complex linear algebraic group. As noted in the previous section, $N$ is isomorphic to the semi-direct product $G\cdot U$, where $U$ is the unipotent radical of $N$ and $G \cong N/U$ is a reductive Levi subgroup of $N$ containing $K$.  The complex reductive Lie group $G$ contains $K$ as a maximal compact Lie subgroup. 
   If $\pi:N \rightarrow N/U$ is the projection map, then the inverse image $\pi^{-1}(L')$ of a torsion-free closed complex Lie subgroup $L' \in N/U \cong G$ is a torsion-free closed complex Lie subgroup of $N$.  This completes the proof.
  \end{proof}
  
  By the above lemma, any proper holomorphic action of  a complex linear algebraic group $N$ of rank $> 1$ on a complex manifold $X$  yields a complex manifold $X/L$.  Any LVMB manifold satisfying condition (K) (cf. \cite{zaffran}),  and all the examples in Section \ref{mr}  are of the form $X/L$ as above.  Further, in all these examples the corresponding group $N$ acts with at most finite stabilizers so that the quotient $X/N$ has the structure of a complex analytic orbifold.

   Now consider any $n$-dimensional, effective, complex analytic orbifold $V$. We will construct some complex manifolds associated to $V$. 

      Note that  $V$ is the quotient of  a complex manifold $X$ by a proper, holomorphic action of $GL(n,\mathbb{C})$. This follows from the so-called frame bundle construction: Let $\{ V_i : i \in I \}$ be an open cover of $V$. Let $\{(U_i, \Gamma_i, q_i) \}$ be an orbifold atlas for $V$ corresponding to this cover. 
   Here we may assume that
\begin{itemize}
\item $U_i \subset \mathbb{C}^n$,
\item  $\Gamma_i$ is a finite subgroup of complex analytic isomorphisms of  $U_i$
   (In fact,   $\Gamma_i$  can be assumed to be a finite subgroup of $GL(n,\mathbb{C})$ by \cite{Cart}),  
\item   $q_i : U_i \to V_i$ is a continuous map that induces a homeomorphism $\Gamma_i  \backslash U_i \to V_i$.
\end{itemize}

\noindent Let $\widetilde{X}_i$ be the space of frames of the holomorphic tangent bundle of $U_i$. Define $X_i = \Gamma_i \backslash \widetilde{X}_i  $. Then the $X_i$ glue together naturally to give a complex manifold $X$. For details, we refer to \cite{wz} where 
        an analogous construction  is described  in  the presence of a hermitian metric.  There is a natural right action of  $GL(n,\mathbb{C})$  on $X$, which is holomorphic and proper (but not free unless $V$ is a manifold). The quotient $X/GL(n, \mathbb{C})$ is isomorphic 
       to $V$ as a complex orbifold. 
       
         Assume $n$ is even. Then $U(n)$ has even dimension. 
        Then by Proposition \ref{semisimple} there exists a  closed and torsion-free complex Lie subgroup $L$ of $GL(n,\mathbb{C})$ such that 
         $GL(n,\mathbb{C})/L$ is diffeomorphic to $U(n)$.  In this case, the complex manifold $X/ L$ is diffeomorphic to the space of unitary   
         frames,  $\widetilde{V}$, of   \cite{wz}. 
       The fiber of the map $p: X/L \to V$ at a regular point of $V$ is biholomorphic to $GL(n,\mathbb{C})/ L $. The fiber is indeed a complex submanifold of $X/L$. Since $U(n)$ is not K\"ahler, $X/L$ can not be K\"ahler.

       Now, assume $n$ is odd. Then $SU(n)$ is even dimensional. Suppose the complex orbifold $V$ is Calabi-Yau. 
 This means there is a holomorphic volume form $\Omega$  on $V$. So each local chart $U_i$ has a holomorphic volume form
 $\Omega_i$ which is invariant under $\Gamma_i$, and these forms are compatible under gluing of orbifold charts. In particular
each $\Gamma_i$  is a subgroup of $SL(n,\mathbb{C})$. We define $\widetilde{X}_i^{\ast}$ to be the space of those  frames of $T^{\ast (1,0)}U_i$ the wedge product of  whose components equals $\Omega_i$.
Let $X_i^{\ast} =  \Gamma_i \backslash \widetilde{X}_i^{\ast} $. Then the  ${X}_i^{\ast}$ glue together to produce a complex manifold ${X}^{\ast}$ which admits a proper holomorphic action of $SL(n,\mathbb{C})$. 
 By a similar argument as above, there exists a complex subgroup $L$ of $SL(n,\mathbb{C})$ such that $SL(n,\mathbb{C})/L \cong SU(n)$ and
 $X^{\ast}/L$ is a complex non-K\"ahler manifold.  
 Note that, by duality, $\Omega$ induces a holomorphic trivialization  of $\wedge^n T^{(1,0)}V$. Here $T^{(1,0)}V$ denotes the holomorphic
 tangent bundle of $V$ whose sections over $V_i$ are the $\Gamma_i$-invariant sections of  $T^{(1,0)}U_i$. Thus, there exists an invariant, 
 nowhere vanishing, holomorphic poly-vector field $\eta$ of degree $n$ on $V$. Therefore, an analogous construction can be made with tangent frames whose components have wedge product $\eta$.
     
       \begin{remark} If we consider $V$ to be an orbifold toric variety,  then the manifolds $X/L$ and $X^*/L$ are different from the LVMB manifolds associated to $V$. \end{remark}

{\bf Acknowledgement.} The authors are grateful to Indranil Biswas,  Arijit Dey, Amit Hogadi, Shengda Hu, Alberto Medina, M. S. Narasimhan,  Marcel Nicolau, A. Raghuram, Parameswaran Sankaran and Yury Ustinovskiy for helpful discussions, comments and suggestions.  They  thank the 
Indian Institute of Technology-Madras, where the initial part of the work was done, for its hospitality. They also thank an anonymous referee for 
advising them to explore connections with special Hermitian structures. 
The research of the first-named author has been supported by a FAPA grant from the Universidad de los Andes, and an SRP grant from the Middle East Technical University, Northern Cyprus Campus. The research of second-named author has been supported by Indian Statistical Institute, Bangalore and DST-Inspire Faculty Scheme(IFA-13-MA-26).

\end{document}